\begin{document}
\setlength{\textwidth}{5.75in}

\makeatletter
\def\@citex[#1]#2{\if@filesw\immediate\write\@auxout{\string\citation{#2}}\fi
  \def\@citea{}\@cite{\@for\@citeb:=#2\do
    {\@citea\def\@citea{, }\@ifundefined
       {b@\@citeb}{{\bf ?}\@warning
       {Citation `\@citeb' on page \thepage \space undefined}}%
\hbox{\csname b@\@citeb\endcsname}}}{#1}}
\makeatother


\makeatletter
\def\doublespaced{\baselineskip=\normalbaselineskip	
    \multiply\baselineskip by 150			
    \divide\baselineskip by 100}			
\def\doublespace{\doublespaced}				
\makeatother

\makeatletter
\def\mitspaced{\baselineskip=\normalbaselineskip	
    \multiply\baselineskip by 115			
    \divide\baselineskip by 100}			
\def\mitspace{\mitspaced}
\makeatother

\makeatletter
\def\capamispaced{\baselineskip=\normalbaselineskip	
    \multiply\baselineskip by 105			
    \divide\baselineskip by 100}			
\def\capamispace{\capamispaced}
\makeatother

\makeatletter
\def\singlespaced{\baselineskip=\normalbaselineskip}	
\def\singlespace{\singlespaced}				
\makeatother

\makeatletter
\def\triplespaced{\baselineskip=\normalbaselineskip	
    \multiply\baselineskip by 3}			
\makeatother

\makeatletter
\def\widenspacing{\multiply\baselineskip by 125		
    \divide\baselineskip by 100}			
\def\whitespace{\widenspacing}				
\makeatother

\font\eightmsam=msam8
\font\ninemsam=msam9
\font\tenmsam=msam10
\newtheorem{Theorem}{Theorem}[section]
\newtheorem{Corollary}[Theorem]{Corollary}
\newtheorem{Remark}[Theorem]{Remark}
\newtheorem{Proposition}[Theorem]{Proposition}
\newtheorem{Lemma}[Theorem]{Lemma}
\newtheorem{Claim}[Theorem]{Claim}
\newtheorem{Definition}[Theorem]{Definition}
\newtheorem{Example}[Theorem]{Example}
\newcommand{\BB}{{\ninemsam\char'04}}
\renewcommand{\Box}{\mbox{\BB}\medskip}
\newcommand{\optimal}{$\Theta(n^{1/2})$}
\newcommand{\joptimal}{$\Theta(n^{1/j})$}
\newcommand{\nfourth}{$\Theta(n^{1/4})$}
\newcommand{\nsixth}{$\Theta(n^{1/6})$}
\newcommand{\nfourthlog}{$\Theta(n^{1/4}\log(n))$}
\newcommand{\linear}{$\Theta(n)$}
\newcommand{\logn}{$\Theta(\log(n))$}

\newcommand{\N}{\mathbb{N}}
\newcommand{\Z}{Z}

\newcommand{\echtp}{\simeq^{\mathrm{EC}}}

\setlength{\baselineskip}{0.3in}
\hoffset -0.4in
\doublespace
\title{Remarks on pointed digital homotopy}
\author{Laurence Boxer
         \thanks{
    Department of Computer and Information Sciences,
    Niagara University,
    Niagara University, NY 14109, USA;
    and Department of Computer Science and Engineering,
    State University of New York at Buffalo.
    E-mail: boxer@niagara.edu
    }
\and
{P. Christopher Staecker
\thanks{
 Department of Mathematics,
 Fairfield University,
 Fairfield, CT 06823-5195, USA.
 E-mail: cstaecker@fairfield.edu
}
}
}
\date{ }
\maketitle

\begin{abstract}
We present and explore in detail a pair of digital
images with $c_u$-adjacencies that are homotopic but not pointed homotopic.
For two digital loops $f,g: [0,m]_Z \rightarrow X$
with the same basepoint, we introduce the notion
of {\em tight at the basepoint (TAB)} pointed
homotopy, which is more restrictive than ordinary
pointed homotopy and yields some different
results.

We present a variant form of the digital fundamental group. Based on
what we call {\em eventually constant} loops, this version of the fundamental
group is equivalent to that of ~\cite{Boxer99}, but offers the advantage that
eventually constant maps are often easier to work with than the
trivial extensions that are key to the development of the fundamental
group in ~\cite{Boxer99} and many subsequent papers.

We show that homotopy equivalent digital images have isomorphic
fundamental groups, even when the homotopy equivalence does not preserve the basepoint.  This assertion appeared in~\cite{Boxer05a}, but
there was an error in the proof; here, we correct the error.

Key words and phrases: {\em digital topology,
digital image, digitally continuous function,
homotopy, homotopy equivalence, fundamental group
}
\end{abstract}

\section{Introduction}
Digital topology adapts tools from geometric and algebraic topology to
the study of digital images. In this paper, we consider questions of
pointed homotopy in digital topology.
We give an example showing that homotopy equivalence
between digital images $(X,c_u)$ and $(Y,c_v)$ does not imply
pointed homotopy equivalence between these images.
This example is then used to illustrate a new variant on the pointed homotopy of digital loops.
We present an alternate
version of the digital fundamental group that appears to have
advantages over the version introduced in~\cite{Boxer99}.
We correct the argument of~\cite{Boxer05a} for the assertion that
homotopy equivalent connected digital images $(X,\kappa)$ and $(Y,\lambda)$
have isomorphic fundamental groups $\Pi_1^{\kappa}(X,x_0)$ and
$\Pi_1^{\lambda}(Y,y_0)$.

Much of the material in section~\ref{prelim} is quoted or paraphrased from
~\cite{BoxKar2}.

\section{Preliminaries}
\label{prelim}
\subsection{General Properties}
\label{dig-con}
Let {\bf Z} be the set of integers.
A {\em (binary) digital image} is a pair $(X,\kappa)$, where
$X \subset {\bf Z}^n$ for some positive integer~$n$, and $\kappa$ is
some adjacency relation for the members of $X$.

Adjacency relations commonly used in the study of
digital images in ${\bf Z}^n$ include the following ~\cite{Han}.
For an integer $u$ such that $1 \leq u \leq n$, we define an
adjacency relation as follows.  Points
\[ p \, = \, (p_1,p_2,\ldots,p_n), ~ q \, = \, (q_1,q_2,\ldots, q_n) \]
are $c_u$-adjacent~\cite{Boxer05b} if
\begin{itemize}
\item $p \neq q$, and
\item there are at most $u$ distinct indices $i$ for which
      $|p_i - q_i| \, = \, 1$, and
\item for all indices $i$, if $|p_i - q_i| \neq 1$ then $p_i \, = \, q_i$.
\end{itemize}

We often denote a $c_u$-adjacency in ${\bf Z}^n$ by the number of points that
are $c_u$-adjacent to a given point in ${\bf Z}^n$.  E.g.,
\begin{itemize}
\item in ${\bf Z}^1$, $c_1$-adjacency is 2-adjacency;
\item in ${\bf Z}^2$, $c_1$-adjacency is 4-adjacency and $c_2$-adjacency is 8-adjacency.
\item in ${\bf Z}^3$, $c_1$-adjacency is 6-adjacency, $c_2$-adjacency is 18-adjacency, and $c_3$-adjacency is 26-adjacency.
\end{itemize}

More general adjacency relations appear in~\cite{Herman}. The work in \cite{Staecker} treats digital images as abstract sets of points with arbitrary adjacencies without regard for their embeddings in $\mathbf Z^n$.

\begin{Definition} {\rm \cite{Boxer}}
\label{dig-int}
Let $a, b \in {\bf Z}$, $a < b$.  A {\rm digital interval}
is a set of the form
\[ [a,b]_{{\bf Z}} ~=~ \{z \in {\bf Z} ~|~ a \leq z \leq b\} \]
in which $c_1$-adjacency is assumed. $\Box$
\end{Definition}

The following generalizes an earlier definition of~\cite{Rosenfeld}.

\begin{Definition}
\label{cont-connect}
{\rm \cite{Boxer99}}
Let $(X,\kappa)$ and $(Y,\lambda)$ be digital images.  Then the function
$f: X \rightarrow Y$ is $(\kappa,\lambda)$-continuous if and only if
for every pair of $\kappa-$adjacent points
$x_0, x_1 \in X$, either $f(x_0)$ $= f(x_1)$, or $f(x_0)$ and $f(x_1)$
are $\lambda-$adjacent. $\Box$
\end{Definition}

See also~\cite{Chen94,Chen04}, where similar concepts are named
{\em immersion}, {\em gradually varied operator}, or
{\em gradually varied mapping}.





A {\em path} from $p$ to $q$ in $(X, \kappa)$ is a $(2,\kappa)$-continuous function
$F : [0,m]_{\bf Z} \rightarrow X$ such that $F(0)=p$ and $F(m)=q$. For a given path $F$, we define the reverse path, $F^{-1}: [0,m]_{\bf Z} \rightarrow X$ defined by
$F^{-1}(t)=F(m-t)$. A {\em loop} is a path $F: [0,m]_{\bf Z} \rightarrow X$ such that $F(0)=F(m)$.

\subsection{Digital homotopy}
\label{dig-htpy}
Intuitively, a homotopy between continuous functions $f,g: X \rightarrow Y$
is a continuous deformation of, say, $f$ over a time period
until the result of the deformation coincides with $g$.

\begin{Definition}{\rm (\cite{Boxer99};} see also~{\rm \cite{Khalimsky})}
\label{htpy-2nd-def}
Let $X$ and $Y$ be digital images.
Let $f,g: X \rightarrow Y$ be $(\kappa,\lambda)-$continuous functions
and suppose there is a positive integer $m$ and a function
\[ F: X \times [0,m]_{{\bf Z}} \rightarrow Y \]
such that

\begin{itemize}
\item for all $x \in X$, $F(x,0) = f(x)$ and $F(x,m)$ $= g(x)$;
\item for all $x \in X$, the induced function
      $F_x: [0,m]_{{\bf Z}} \rightarrow Y$ defined by
          \[ F_x(t) ~=~ F(x,t) \mbox{ for all } t \in [0,m]_{{\bf Z}}, \]
          is $(c_1,\lambda)-$continuous;
\item for all $t \in [0,m]_{{\bf Z}}$, the induced function
         $F_t: X \rightarrow Y$ defined by
          \[ F_t(x) ~=~ F(x,t) \mbox{ for all } x \in  X, \]
          is $(\kappa,\lambda)-$continuous.
\end{itemize}
Then $F$ is a {\rm digital $(\kappa,\lambda)-$homotopy between} $f$ and
$g$, and $f$ and $g$ are $(\kappa,\lambda)${\rm -homotopic in} $Y$. If $m=1$, then $f$ and $g$ are homotopic \emph{in 1 step}. 

If, further, there exists $x_0 \in X$ such that $F(x_0, t)= F(x_0,0)$ for
all $t \in [0,m]_{\bf Z}$, we say $F$ is a {\rm pointed homotopy}. If $g$ is a constant function, we say $F$ is a
{\rm nullhomotopy}, and $f$ is {\rm nullhomotopic}.
$\Box$
\end{Definition}

The notation $f~\simeq_{( \kappa, \lambda)} ~g$
indicates that functions $f$ and $g$ are digitally
$(\kappa,\lambda)-$homotopic in $Y$. If $\kappa=\lambda$, we abbreviate this as $f\simeq_\kappa g$. When the adjacencies are understood we simply write $f\simeq g$.
%

Digital homotopy is an equivalence relation among digitally continuous
functions~\cite{Khalimsky,Boxer99}.

Let $H: [0,m]_{\bf Z} \times [0,n]_{\bf Z} \rightarrow X$ 
be a homotopy between paths
$f, g: [0,m]_{\bf Z} \rightarrow X$. We say $H$ {\em holds the endpoints fixed} if
 $f(0)=H(0,t)=g(0)$ and 
$f(m)=H(m,t)=g(m)$ for all
$t \in [0,n]_{\bf Z}$. If $f$ and $g$ are loops,
we say $H$ is {\em loop preserving} if 
$H(0,t)=H(m,t)$ for all $t \in [0,n]_{\bf Z}$.
Notice that if $f$ and $g$ are loops and
$H$ holds the endpoints
fixed, then $H$ is a loop preserving pointed homotopy 
between $f$ and $g$.

As in classical topology, we say two digital images $(X,\kappa)$ and $(Y,\lambda)$ are \emph{homotopy equivalent} when there are continuous functions $f:X\to Y$ and $g:Y\to X$ such that $g\circ f \simeq_{(\kappa,\lambda)} 1_X$ and $f\circ g \simeq_{(\lambda,\kappa)} 1_Y$. 

\subsection{Digital fundamental group}

If $f$ and $g$ are paths in $X$
such that $g$ starts where $f$ ends,
the {\em product} (see~\cite{Khalimsky})
of $f$ and $g$, written $f * g$, is, intuitively, the path obtained
by following $f$, then following $g$.
Formally, if $f: [0,m_1]_{{\bf Z}} \rightarrow X$,
$g: [0,m_2]_{{\bf Z}} \rightarrow X$, and $f(m_1)=g(0)$, then
$(f * g): [0,m_1+m_2]_{{\bf Z}} \rightarrow X$ is defined by
\[(f * g)(t) = \left\{
         \begin{array}{ll}
               f(t) &  \mbox{if } t \in [0,m_1]_{{\bf Z}}; \\
               g(t - m_1) & \mbox{if } t \in [m_1,m_1+m_2]_{{\bf Z}}.
         \end{array}
        \right .  \]
Restriction of loop classes to loops defined on the
same digital interval would be undesirable.  The following
notion of {\em trivial extension} to permit a loop
to ``stretch" within the same pointed homotopy class. In
section~\ref{ec-section}, we will introduce a different method of
``stretching'' a loop within its pointed homotopy class.
Intuitively, $f'$ is a trivial extension of $f$ if $f'$ follows the same
path as $f$, but more slowly, with pauses for rest
(subintervals of the domain on which $f'$ is constant).

\begin{Definition}
\label{triv-extension}
{\rm \cite{Boxer99}}
Let $f$ and $f'$ be loops in a pointed digital image $(X,x_0)$.
We say
$f'$ is a {\rm trivial extension of} $f$ if there are sets of paths
$\{f_1, f_2, \ldots, f_k\}$ and $\{F_1, F_2, \ldots, F_p\}$ in $X$ such that
\begin{enumerate}
  \item $0<k \leq p$;
  \item $f = f_1 * f_2 * \ldots * f_k$;
  \item $f' = F_1 * F_2 * \ldots * F_p$;
  \item there are indices
        $1 \leq i_1 < i_2 < \ldots < i_k \leq p$ such that
  \begin{itemize}
    \item $F_{i_j} = f_j$, $1 \leq j \leq k$, and
    \item $i \not \in \{i_1, i_2, \ldots, i_k\}$ implies $F_i$ is a
          trivial loop. $\Box$
  \end{itemize}
\end{enumerate}
\end{Definition}

This notion lets us compare the digital homotopy properties of loops
whose domains may have differing cardinality, since if $m_1 \leq m_2$,
we can obtain \cite{Boxer99} a trivial extension of a loop
$f:[0,m_1]_{{\bf Z}} \rightarrow X$ to
$f':[0,m_2]_{{\bf Z}} \rightarrow X$ via
\[ f'(t) =  \left\{
         \begin{array}{ll}
               f(t) & \mbox{if } 0 \leq t \leq m_1; \\
               f(m_1) &  \mbox{if } m_1 \leq t \leq m_2.
         \end{array}
        \right.  \]
Observe that every digital loop $f$ is a trivial extension of itself.

\begin{Definition}
\label{loop-class}
{\rm (\cite{Han}, correcting an earlier definition in \cite{Boxer05a}).}
Two loops $f_0,f_1$ with the same base point $p \in X$
belong to the same loop class~$[f]_X$ if they have trivial extensions
that can be joined by a homotopy $H$ that keeps the endpoints fixed.
\end{Definition}

It was incorrectly asserted as Proposition~3.1 of~\cite{Boxer05a} 
that the assumption in Definition~\ref{loop-class}, that the homotopy keeps the endpoints fixed, 
could be replaced by the weaker assumption
that the homotopy is loop-preserving; the error was pointed out in \cite{Boxer06}.

Membership in the same loop class in $(X,x_0)$ is
an equivalence relation among loops
~\cite{Boxer99}.



The digital fundamental group is derived from
a classical notion of algebraic topology (see \cite{Massey,Munkres,Spanier}).
The version discussed in this section is that developed in~\cite{Boxer99}.
The next result is used in \cite{Boxer99} to show the product operation of our
digital fundamental group is well defined.

\begin{Proposition}
\label{well-defined}
{\rm \cite{Boxer99,Khalimsky}}
Let $f_1,f_2,g_1,g_2$ be digital loops based at $x_0$ in a pointed digital
image $(X,x_0)$, with $f_2 \in [f_1]_X$ and
$g_2 \in [g_1]_X$.  Then $f_2 * g_2 \in [f_1 * g_1]_X$. \qed
\end{Proposition}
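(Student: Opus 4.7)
The plan is to unpack both hypotheses into concrete data and then concatenate that data horizontally in the loop direction.

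First, I would use $f_2\in[f_1]_X$ to obtain trivial extensions $\tilde f_1,\tilde f_2:[0,M]_{{\bf Z}}\to X$ of $f_1,f_2$ together with an endpoint-fixing homotopy $F:[0,M]_{{\bf Z}}\times[0,n_1]_{{\bf Z}}\to X$ joining them; similarly, from $g_2\in[g_1]_X$, I obtain trivial extensions $\tilde g_1,\tilde g_2:[0,N]_{{\bf Z}}\to X$ and an endpoint-fixing homotopy $G:[0,N]_{{\bf Z}}\times[0,n_2]_{{\bf Z}}\to X$. By extending one of $F$ or $G$ by a constant homotopy in the time direction, I may assume both homotopies are parameterized by the same $[0,n]_{{\bf Z}}$.

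The next step is to show that $\tilde f_i*\tilde g_i$ is a trivial extension of $f_i*g_i$ for $i=1,2$. If the decompositions witnessing the trivial extensions are $f_i=f_i^{(1)}*\cdots*f_i^{(k)}$ with $\tilde f_i=F^i_1*\cdots *F^i_p$, and analogously $g_i=g_i^{(1)}*\cdots*g_i^{(\ell)}$ with $\tilde g_i=G^i_1*\cdots *G^i_q$, then concatenating gives
\[
f_i*g_i \;=\; f_i^{(1)}*\cdots *f_i^{(k)}*g_i^{(1)}*\cdots *g_i^{(\ell)},\qquad
\tilde f_i*\tilde g_i \;=\; F^i_1*\cdots *F^i_p*G^i_1*\cdots *G^i_q,
\]
with the unified index set in $\{1,\dots,p+q\}$ obtained by shifting the indices for $g_i$ by $p$. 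This directly matches Definition~\ref{triv-extension}.

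Now I would assemble the horizontal concatenation $H:[0,M+N]_{{\bf Z}}\times[0,n]_{{\bf Z}}\to X$ defined by $H(t,s)=F(t,s)$ for $t\in[0,M]_{{\bf Z}}$ and $H(t,s)=G(t-M,s)$ for $t\in[M,M+N]_{{\bf Z}}$. The patching at $t=M$ is consistent because $F$ holds endpoints fixed and $\tilde f_i,\tilde g_i$ are loops at $x_0$: $F(M,s)=\tilde f_1(M)=x_0=\tilde g_1(0)=G(0,s)$. Each time slice $H_s$ is $(2,\kappa)$-continuous as a concatenation of continuous paths meeting at $x_0$, each spatial slice $H_t$ is $c_1$-continuous since it is either $F_t$ or a shifted $G_t$, and $H(\cdot,0)=\tilde f_1*\tilde g_1$, $H(\cdot,n)=\tilde f_2*\tilde g_2$. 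Finally, $H(0,s)=F(0,s)=x_0$ and $H(M+N,s)=G(N,s)=x_0$, so $H$ holds endpoints fixed. Combined with the previous paragraph, this shows $f_2*g_2\in[f_1*g_1]_X$.

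The only genuine obstacle is the bookkeeping for the trivial-extension step: one must verify that combining two trivial-extension decompositions factor-by-factor produces a valid decomposition in the sense of Definition~\ref{triv-extension}, and that the constant pieces stay constant under concatenation. Everything else is a routine patching argument.
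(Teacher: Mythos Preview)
Your argument is correct and is essentially the standard proof of this fact. Note, however, that the paper does not actually prove this proposition: it is stated with a \qed\ and attributed to \cite{Boxer99,Khalimsky}, so there is no in-paper proof to compare against. Your write-up would serve as a perfectly good self-contained substitute for that citation; the horizontal concatenation of the two endpoint-fixing homotopies, together with the observation that concatenating the two trivial-extension decompositions yields a trivial-extension decomposition of the product, is exactly how the result is established in the cited sources.
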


Let $(X,x_0)$ be a pointed digital image;
{\em i.e.}, $X$ is a digital image, and $x_0 \in X$.
Define $\Pi_1(X,x_0)$ to be the set of loop classes
$[f]_X$ in $X$ with base point $x_0$. When we wish to emphasize an adjacency relation $\kappa$, we denote this set by $\Pi_1^\kappa(X,x_0)$. 
By Proposition~\ref{well-defined}, the {\em product} operation
\[ [f]_X \cdot [g]_X~=~[f * g]_X \]
is well defined on $\Pi_1(X,x_0)$;
further, the operation $\cdot$ is associative on
$\Pi_1(X,x_0)$~\cite{Khalimsky}.

\begin{Lemma}
\label{ident-elt}
{\rm \cite{Boxer99}}
Let $(X,x_0)$ be a pointed digital image.
Let ${\overline{x_0}}: [0,m]_{{\bf Z}} \rightarrow X$ be a constant loop
with image $\{x_0\}$.  Then $[{\overline{x_0}}]_X$ is an identity element for
$\Pi_1(X, x_0)$. \qed
\end{Lemma}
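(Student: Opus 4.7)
The plan is to show $[\overline{x_0}]_X$ acts as a two-sided identity by recognizing $\overline{x_0} * f$ and $f * \overline{x_0}$ as trivial extensions of $f$ in the sense of Definition \ref{triv-extension}, and then applying the definition of the product operation on $\Pi_1(X,x_0)$.

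Let $[f]_X \in \Pi_1(X,x_0)$, with representative $f : [0,n]_{\mathbf Z} \to X$. To show $[f]_X \cdot [\overline{x_0}]_X = [f]_X$, I would first unwind the product: by the definition of $\cdot$ on loop classes, this equals $[f * \overline{x_0}]_X$. So it suffices to show that $f$ and $f * \overline{x_0}$ lie in the same loop class. I claim $f * \overline{x_0}$ is a trivial extension of $f$. Indeed, in the notation of Definition \ref{triv-extension}, take $k=1$ with $f_1 = f$, and $p=2$ with $F_1 = f$, $F_2 = \overline{x_0}$; the required index is $i_1 = 1$, and $F_2$ is a trivial loop at $x_0$ (which is where $f$ ends, so the concatenation makes sense). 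The symmetric construction (with $k=1$, $p=2$, $F_1 = \overline{x_0}$, $F_2 = f$, and $i_1 = 2$) shows that $\overline{x_0} * f$ is also a trivial extension of $f$, giving $[\overline{x_0}]_X \cdot [f]_X = [\overline{x_0} * f]_X = [f]_X$.

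The final step is to observe that if $f'$ is a trivial extension of $f$, then $[f']_X = [f]_X$. This is essentially immediate from Definition \ref{loop-class}: $f'$ is a trivial extension of itself, and $f'$ is a trivial extension of $f$; the constant homotopy from $f'$ to $f'$ holds endpoints fixed, so the two loops share a common trivial extension joined by an endpoint-fixing homotopy.

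I do not anticipate any real obstacle; the only mildly subtle point is verifying that the decompositions above fit the formal template of Definition \ref{triv-extension} (in particular that a single uninterrupted path $f$ counts as a valid length-$1$ decomposition, and that the inserted $\overline{x_0}$ is a trivial loop based at the correct point, which it is since $f(0) = f(n) = x_0$).
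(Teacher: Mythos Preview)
Your argument is correct: recognizing $f*\overline{x_0}$ and $\overline{x_0}*f$ as trivial extensions of $f$ via Definition~\ref{triv-extension}, and then invoking Definition~\ref{loop-class} with the constant homotopy, is exactly the right way to see that $[\overline{x_0}]_X$ is a two-sided identity. Note, however, that the paper itself does not supply a proof of this lemma at all---it is stated with a citation to~\cite{Boxer99} and closed immediately with a \qed---so there is no in-paper argument to compare against; you have simply written out the standard verification that the cited reference would contain.
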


\begin{Lemma}
\label{inverse}
{\rm \cite{Boxer99}}
If $f: [0,m]_{{\bf Z}} \rightarrow X$ represents an element of $\Pi_1(X,x_0)$,
then the reverse loop $f^{-1}$ 
is an element of $[f]_X^{-1}$ in $\Pi_1(X,x_0)$. \qed
\end{Lemma}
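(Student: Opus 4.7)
The plan is to exhibit explicit homotopies showing both $f * f^{-1}$ and $f^{-1} * f$ are loop-equivalent to the constant loop $\overline{x_0}$ at $x_0$; by Lemma~\ref{ident-elt} and associativity of the product on $\Pi_1(X,x_0)$, this is exactly what it means for $[f^{-1}]_X$ to be inverse to $[f]_X$. Since $(f^{-1})^{-1} = f$, the two cases are symmetric, so I would focus on $f * f^{-1}$.

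For this, I would construct a homotopy $H: [0,2m]_{\mathbf{Z}} \times [0,m]_{\mathbf{Z}} \rightarrow X$ that ``retracts'' the out-and-back loop $f * f^{-1}$ to the constant loop at $x_0$ by making the turnaround happen earlier and earlier. Concretely, define
\[
H(s,t) \;=\; \begin{cases} f(s) & \text{if } 0 \le s \le m - t, \\ f(m-t) & \text{if } m - t \le s \le m + t, \\ f(2m - s) & \text{if } m + t \le s \le 2m. \end{cases}
\]
The three pieces agree on their overlaps, so $H$ is well defined. At $t = 0$ the middle strip degenerates to the point $s = m$, and one recovers $H(s,0) = (f * f^{-1})(s)$; at $t = m$ the outer strips degenerate and $H(s,m) = f(0) = x_0$ for every $s$, giving the constant loop of length $2m$. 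Moreover $H(0,t) = f(0) = x_0$ and $H(2m,t) = f(0) = x_0$ for all $t$, so the endpoints are held fixed throughout.

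The substance of the verification lies in checking that $H$ satisfies the three continuity conditions of Definition~\ref{htpy-2nd-def}. For each fixed slice $H_t$ and $H_s$ one traces through the piecewise formula and observes that on each piece, neighboring values of the argument either yield equal outputs (in the constant middle strip) or outputs of the form $f(a)$ and $f(a+1)$ (which are equal or $\kappa$-adjacent by continuity of $f$); the only subtlety is crossings between adjacent pieces, where the formulas already coincide on the boundary line. So the output differs across a single step of $s$ or $t$ by at most one step of $f$'s argument.

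The main obstacle is simply the bookkeeping of this piecewise-linear construction. Once the continuity of $H$ is established, $H$ is a homotopy from $f * f^{-1}$ to $\overline{x_0}$ that holds endpoints fixed, so $[f * f^{-1}]_X = [\overline{x_0}]_X$ by Definition~\ref{loop-class} (no trivial extensions needed since both loops already share the same domain $[0,2m]_{\mathbf{Z}}$). Replacing $f$ with $f^{-1}$ in the above argument gives $[f^{-1} * f]_X = [\overline{x_0}]_X$. By Lemma~\ref{ident-elt}, $[f]_X \cdot [f^{-1}]_X = [f^{-1}]_X \cdot [f]_X = [\overline{x_0}]_X$ is the identity of $\Pi_1(X,x_0)$, so $f^{-1}$ represents $[f]_X^{-1}$.
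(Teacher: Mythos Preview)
Your construction is correct and is the standard ``retraction'' homotopy for showing that reverse loops give inverses; the verification of continuity in both variables is routine and your sketch covers the necessary cases. Note, however, that the paper does not actually prove this lemma: it is stated with a citation to~\cite{Boxer99} and marked \qed, so there is no proof in the paper to compare against. Your argument is essentially the one given in that reference.
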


\begin{Theorem}
{\rm \cite{Boxer99}}
$\Pi_1(X,x_0)$ is a group under the $\cdot$ product operation,
the {\em fundamental group of} $(X,x_0)$.
\qed
\end{Theorem}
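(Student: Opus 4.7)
The plan is essentially to assemble the group axioms from the results already listed in the subsection, since each one has been verified separately. I would first observe that the product $[f]_X \cdot [g]_X = [f*g]_X$ is well-defined on $\Pi_1(X,x_0)$ by Proposition~\ref{well-defined}: replacing $f$ or $g$ by a representative in the same loop class produces a product in the same loop class. Associativity has already been asserted in the paragraph following the definition of $\cdot$, with a citation to~\cite{Khalimsky}, so I would simply invoke it; if an explicit argument were wanted, one checks that $(f*g)*h$ and $f*(g*h)$ agree as functions on $[0,m_1+m_2+m_3]_{\mathbf Z}$ up to trivial extension, hence represent the same loop class.

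Next I would hand off the existence of an identity to Lemma~\ref{ident-elt}, which says that for a constant loop $\overline{x_0}$ at the basepoint, $[\overline{x_0}]_X$ satisfies $[\overline{x_0}]_X \cdot [f]_X = [f]_X = [f]_X \cdot [\overline{x_0}]_X$ for every $[f]_X \in \Pi_1(X,x_0)$. Similarly, Lemma~\ref{inverse} supplies inverses: for any loop $f$, the reverse loop $f^{-1}$ represents a class with $[f]_X \cdot [f^{-1}]_X = [\overline{x_0}]_X = [f^{-1}]_X \cdot [f]_X$.

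Because all four group axioms (closure, associativity, identity, inverses) have been established by the cited propositions and lemmas, the conclusion follows immediately by combining them. I do not foresee a genuine obstacle here; the one point that requires a tiny bit of care is that the identity and inverse lemmas are stated using some particular constant loop or reverse loop, and one must remember that the loop-class product is insensitive to the domain length (since trivial extensions of a loop lie in the same class). That observation is exactly what Definition~\ref{loop-class} and Proposition~\ref{well-defined} are set up to guarantee, so no further work is needed beyond citing the prior results.
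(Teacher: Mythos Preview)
Your proposal is correct and matches the paper's approach: the theorem is stated with a \qed and a citation to~\cite{Boxer99}, so the paper itself gives no proof beyond the preceding Proposition~\ref{well-defined} and Lemmas~\ref{ident-elt} and~\ref{inverse}, which you have correctly assembled into the group axioms. Nothing more is needed.
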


\begin{Theorem}
\rm{\cite{Boxer99}}
\label{induced}
Suppose $F: (X, \kappa, x_0) \rightarrow (Y,\lambda, y_0)$
is a pointed continuous function. Then
$F$ induces a homomorphism
$F_*: \Pi_1^{\kappa}(X,x_0) \rightarrow
    \Pi_1^{\lambda}(Y,y_0)$
defined by $F_*([f])=[F \circ f]$. $\Box$
\end{Theorem}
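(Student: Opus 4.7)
The plan is to verify three things in order: (a) that $F_*$ actually sends loops at $x_0$ to loops at $y_0$, (b) that it respects the loop-class equivalence from Definition~\ref{loop-class} so the formula $F_*([f])=[F\circ f]$ is well defined, and (c) that $F_*$ preserves the product, i.e.\ it is a homomorphism.

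For (a), given a loop $f:[0,m]_{\bf Z}\to X$ based at $x_0$, the composition $F\circ f$ is $(c_1,\lambda)$-continuous as a composition of continuous functions, and $(F\circ f)(0) = F(x_0) = y_0 = F(x_0) = (F\circ f)(m)$, so $F\circ f$ is a loop at $y_0$. For (b), I would first observe that post-composition with $F$ commutes with the concatenation $*$: directly from the piecewise definition of $f*g$, $F\circ (f*g) = (F\circ f)*(F\circ g)$. In particular, if $f'=F_1 * \cdots * F_p$ is a trivial extension of $f=f_1*\cdots*f_k$ with the indices $i_1<\cdots<i_k$ as in Definition~\ref{triv-extension}, then $F\circ f' = (F\circ F_1)*\cdots*(F\circ F_p)$ is a trivial extension of $F\circ f$, since $F\circ F_{i_j} = F\circ f_j$ and $F\circ F_i$ is a constant loop whenever $F_i$ is. Next, if $f_0,f_1$ are in the same loop class, take trivial extensions $f_0',f_1'$ joined by an endpoint-preserving homotopy $H:[0,m]_{\bf Z}\times[0,n]_{\bf Z}\to X$. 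The composite $F\circ H$ is a homotopy between $F\circ f_0'$ and $F\circ f_1'$: each slice $(F\circ H)_x = F\circ H_x$ is $(c_1,\lambda)$-continuous, each slice $(F\circ H)_t = F\circ H_t$ is $(c_1,\lambda)$-continuous, and the fixed-endpoint condition $H(0,t)=x_0$, $H(m,t)=x_0$ maps under $F$ to $(F\circ H)(0,t)=y_0=(F\circ H)(m,t)$. Combined with the trivial-extension observation, this shows $F\circ f_0$ and $F\circ f_1$ represent the same class in $\Pi_1^\lambda(Y,y_0)$.

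For (c), the homomorphism property is immediate: $F_*([f]\cdot[g]) = F_*([f*g]_X) = [F\circ(f*g)]_Y = [(F\circ f)*(F\circ g)]_Y = F_*([f])\cdot F_*([g])$, using the piecewise identity above. None of the steps present real difficulty; the only point that requires any care is the verification in (b) that $F\circ H$ inherits both the slicewise continuity conditions and the basepoint-fixing condition of Definition~\ref{htpy-2nd-def}, and that trivial extensions are honestly preserved under post-composition. Once these bookkeeping items are spelled out, the theorem follows.
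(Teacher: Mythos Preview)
Your argument is correct and is the standard verification; the paper itself does not prove this theorem but merely cites it from~\cite{Boxer99}, so there is no in-paper proof to compare against. One tiny wording quibble: in part (b) you wrote ``each slice $(F\circ H)_t = F\circ H_t$ is $(c_1,\lambda)$-continuous,'' which is right here because the domain of $H_t$ is an interval, but be careful in general that the first adjacency should match the domain's adjacency.
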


\section{Homotopy equivalent images that aren't pointed homotopy equivalent}
In~\cite{Boxer05a}, it was asked if, given digital images $(X,\kappa)$ and
$(Y,\lambda)$ that are homotopy equivalent, must $(X,x_0,\kappa)$ and
$(Y,y_0,\lambda)$ be pointed homotopy equivalent for arbitrary
base points $x_0 \in X$, $y_0 \in Y$? The paper~\cite{Staecker} gives an
example, not using any of the $c_u$-adjacencies,
that answers this question in the negative.
It is desirable to have an example that uses $c_u$-adjacencies. In this
section, we give such an example by modifying that of ~\cite{Staecker}.

\begin{Example}
\label{the-ex}
Let $X = \{x_i\}_{i=0}^{10} \subset {\bf Z}^2$ where
$x_0=(2,0)$, $x_1=(1,1)$, $x_2=(0,2)$, $x_3=(-1,2)$, $x_4=(-2,1)$,
$x_5=(-2,0)$, $x_6=(-2,-1)$, $x_7=(-1,-2)$, $x_8=(0,-2)$, $x_9=(1,-1)$,
$x_{10}=(0,0)$. Let $Y=X \setminus \{x_0\} = \{x_i\}_{i=1}^{10}$. We consider both $X$ and $Y$ as digital images with $c_2$-adjacency.
See Figure~1. $\Box$
\end{Example}

\begin{figure}
\label{2b-htpc-single}

\begin{center}\begin{tikzpicture}[scale=.35]
	\filldraw[fill=white, xshift=2cm,yshift=4cm]
		(45:1.2) \foreach \x in {135,225,315,45} { -- (\x:1.2) };
	\filldraw[fill=white, xshift=2cm,yshift=6cm]
		(45:1.2) \foreach \x in {135,225,315,45} { -- (\x:1.2) };
	\filldraw[fill=white, xshift=2cm,yshift=8cm]
		(45:1.2) \foreach \x in {135,225,315,45} { -- (\x:1.2) };
	\filldraw[fill=white, xshift=4cm,yshift=2cm]
		(45:1.2) \foreach \x in {135,225,315,45} { -- (\x:1.2) };
	\filldraw[fill=white, xshift=4cm,yshift=10cm]
		(45:1.2) \foreach \x in {135,225,315,45} { -- (\x:1.2) };
	\filldraw[fill=white, xshift=6cm,yshift=2cm]
		(45:1.2) \foreach \x in {135,225,315,45} { -- (\x:1.2) };
	\filldraw[fill=white, xshift=6cm,yshift=6cm]
		(45:1.2) \foreach \x in {135,225,315,45} { -- (\x:1.2) };
	\filldraw[fill=white, xshift=6cm,yshift=10cm]
		(45:1.2) \foreach \x in {135,225,315,45} { -- (\x:1.2) };
	\filldraw[fill=white, xshift=8cm,yshift=4cm]
		(45:1.2) \foreach \x in {135,225,315,45} { -- (\x:1.2) };
	\filldraw[fill=white, xshift=8cm,yshift=8cm]
		(45:1.2) \foreach \x in {135,225,315,45} { -- (\x:1.2) };
	\filldraw[fill=white, xshift=10cm,yshift=6cm]
		(45:1.2) \foreach \x in {135,225,315,45} { -- (\x:1.2) };
	\node () at (2cm,4cm) {$x_{6}$};
	\node () at (2cm,6cm) {$x_{5}$};
	\node () at (2cm,8cm) {$x_{4}$};
	\node () at (4cm,2cm) {$x_{7}$};
	\node () at (4cm,10cm) {$x_{3}$};
	\node () at (6cm,2cm) {$x_{8}$};
	\node () at (6cm,6cm) {$x_{10}$};
	\node () at (6cm,10cm) {$x_{2}$};
	\node () at (8cm,4cm) {$x_{9}$};
	\node () at (8cm,8cm) {$x_{1}$};
	\node () at (10cm,6cm) {$x_{0}$};
\end{tikzpicture}
\end{center}
\caption{A figure $X=\{x_i\}_{i=0}^{10}$ and its subset $Y=X\setminus \{x_0\}$
that are homotopic but not pointed homotopic as images in ${\bf Z}^2$ with $c_2$-adjacency}
\end{figure}
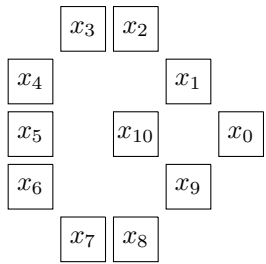

\begin{Proposition}
\label{htpy-equiv}
Let $X$ and $Y$ be the images of Example~\ref{the-ex}.  Then
$X$ and $Y$ are $(c_2,c_2)$-homotopy equivalent.
\end{Proposition}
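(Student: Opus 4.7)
The plan is to exhibit an explicit pair of maps witnessing the $(c_2,c_2)$-homotopy equivalence. Let $g: Y\to X$ be the inclusion, and define $f: X\to Y$ to be the identity on $Y$ together with $f(x_0)=x_{10}$. The only nontrivial continuity checks for $f$ are at the adjacencies $x_0\sim x_1$ and $x_0\sim x_9$; both succeed because $x_{10}$ is $c_2$-adjacent to each of $x_1$ and $x_9$ in $Y$. Since $f\circ g=1_Y$ by inspection, the substance of the proof is to produce a $(c_2,c_2)$-homotopy from $1_X$ to $g\circ f$.

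The map $g\circ f$ agrees with $1_X$ except at $x_0$, where it takes the value $x_{10}$. One cannot simply move $x_0$ along a path toward $x_{10}$ while leaving the other ten points fixed: at any intermediate stage the image of $x_0$ would be forced to be $x_1$ or $x_9$, and then continuity at the remaining adjacency $x_0\sim x_9$ (respectively $x_0\sim x_1$) would demand $x_1\sim x_9$ in $X$, which fails because $x_1$ and $x_9$ differ in the $y$-coordinate by $2$. So the homotopy must reshuffle several other points simultaneously.

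My proposal is a two-step homotopy $H: X\times[0,2]_{{\bf Z}}\to X$ with $H(\cdot,0)=1_X$ and $H(\cdot,2)=g\circ f$, whose middle slice $H_1$ advances each outer-cycle vertex by one index: $H_1(x_i)=x_{i+1}$ for $0\le i\le 8$, $H_1(x_9)=x_{10}$, and $H_1(x_{10})=x_1$. The shape of $H_1$ is essentially forced by propagating continuity constraints outward from the initial choice $H_1(x_0)=x_1$ and then resolving the two chord-adjacencies at $x_{10}$. The verification has three routine pieces: spatial continuity of $H_1$ (a check over the twelve adjacencies of $X$, in which the chord adjacencies $x_1\sim x_{10}$ and $x_9\sim x_{10}$ are exactly what allow the shift to close up on the inside of the diagram); $c_1$-continuity of each path $t\mapsto H(x_i,t)$ (immediate, since in each case consecutive values are either equal or $c_2$-adjacent); and continuity of $H_2=g\circ f$ (which again reduces to the two adjacencies at $x_0$). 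No step presents a genuine obstacle; the conceptual point is that the local discrepancy at $x_0$ can only be resolved by a global cyclic shift of the outer ring as an intermediate stage.
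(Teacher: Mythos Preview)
Your proof is correct, but it takes a different route from the paper's. The paper chooses $f$ to be the cyclic shift itself: $f(x_i)=x_{i+1}$ for $0\le i\le 9$ and $f(x_{10})=x_1$, with $g$ the inclusion. With that choice, \emph{both} compositions $g\circ f$ and $f\circ g$ are one-step homotopic to the respective identities, so the verification is two short one-step checks.

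You instead take the more obvious retraction $f$ (identity on $Y$, $x_0\mapsto x_{10}$), which buys you $f\circ g=1_Y$ for free but forces a two-step homotopy on the $X$ side, since $x_0$ and $x_{10}$ are not $c_2$-adjacent. Your intermediate stage $H_1$ is exactly the paper's $g\circ f$, so in effect you have rediscovered the paper's map as the bridge between $1_X$ and your retraction. Your argument is a bit longer but also more explanatory: it shows \emph{why} the cyclic shift is needed, rather than pulling it out of thin air. Either approach is perfectly acceptable.
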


\begin{proof}
Let $f: X \rightarrow Y$ be defined by
\[ f(x_i) = \left \{ \begin{array}{ll}
            x_{i+1} & \mbox{ if } 0 \leq i \leq 9; \\
            x_1 & \mbox{ if } i=10.
            \end{array}
            \right . \]
Let $g: Y \rightarrow X$ be the inclusion map. Clearly, both $f$ and $g$ are
$(c_2, c_2)$-continuous.
The function $H: X \times [0,1]_Z \rightarrow X$ defined by
\[ H(x_i,t) = \left \{ \begin{array}{ll}
                       f(x_i) = g \circ f(x_i) & \mbox{ if } t=0; \\
                       x_i & \mbox{ if } t=1,
                      \end{array}
                      \right .
\]
is clearly a $(c_2,c_2)$-homotopy between $g \circ f$ and $1_X$.
The function $K: Y \times [0,1]_Z \rightarrow Y$ defined by
\[ K(x_i,t) = \left \{ \begin{array}{ll}
                       f(x_i) = f \circ g(x_i) & \mbox{ if } t=0 
                                \mbox{ and } 1 \leq i \leq 10; \\
                       x_i & \mbox{ if } t=1
                                \mbox{ and } 1 \leq i \leq 10,
                      \end{array}
                      \right .
\]
is clearly a $(c_2,c_2)$-homotopy between $f \circ g$ and $1_Y$.
Thus, $(X,c_2)$ and $(Y,c_2)$ are homotopy equivalent. 
\end{proof}


\begin{Proposition}
\label{Ymust-be-id}
Let $Y=\{x_i\}_{i=1}^{10}$ be as above.
Let $h: (Y, c_2) \rightarrow (Y,c_2)$ be a continuous map such
that $h(x) = x$ for some $x\in Y$ and $h$ is $(c_2,c_2)$-homotopic to $1_Y$ in 1 step.
Then $h=1_Y$.
\end{Proposition}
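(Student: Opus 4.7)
The plan is to exhibit $(Y,c_2)$ as a combinatorial $10$-cycle and then show that any $(c_2,c_2)$-continuous self-map of a $10$-cycle that is one-step homotopic to the identity and has at least one fixed point must be the identity, by a ``degree zero'' telescoping argument.

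First I would verify by direct inspection of coordinates that once $x_0=(2,0)$ is deleted, the only remaining $c_2$-adjacencies among the points of $Y$ are the consecutive ones $x_i\sim x_{i+1}$ for $i=1,\dots,9$, together with $x_{10}\sim x_1$; in particular $x_{10}=(0,0)$ loses $x_0$ as a $c_2$-neighbor and keeps only $x_1$ and $x_9$, since $x_2,\dots,x_8$ each differ from $x_{10}$ by $2$ in some coordinate. Hence $(Y,c_2)$ is graph-theoretically the $10$-cycle, whose vertices I index cyclically by $\mathbb Z/10\mathbb Z$ with representatives $\{1,\dots,10\}$.

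Next I would unpack the one-step homotopy hypothesis. A homotopy $F\colon Y\times [0,1]_{\bf Z}\to Y$ between $h$ and $1_Y$ has slices $F_{x_i}$ that are $(c_1,c_2)$-continuous, which (since $[0,1]_{\bf Z}$ has only two points) forces $h(x_i)$ to be equal or $c_2$-adjacent to $x_i$; in our cycle this means
\[h(x_i)=x_{i+s_i},\qquad s_i\in\{-1,0,1\},\]
with indices read cyclically. Continuity of $h$ itself along each edge $x_ix_{i+1}$ then forces $h(x_i)$ and $h(x_{i+1})$ to be equal or adjacent, i.e.\ the integer $1+s_{i+1}-s_i$ must be congruent mod $10$ to one of $-1,0,1$. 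Since $s_i,s_{i+1}\in\{-1,0,1\}$ this integer already lies in $[-1,3]$, so the congruence forces it to equal one of $-1,0,1$ outright, which gives $s_{i+1}-s_i\in\{-2,-1,0\}$ and in particular $s_{i+1}\le s_i$ for every $i$.

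The rest is then immediate: telescoping $s_1\ge s_2\ge\cdots\ge s_{10}\ge s_1$ around the cycle forces $(s_i)$ to be constant, and the hypothesis that $h$ fixes some $x_j$ pins that constant to $s_j=0$, so $h=1_Y$. I expect the third step---the modular bookkeeping that turns the local continuity of $h$ into the one-sided inequality $s_{i+1}\le s_i$---to be the main obstacle; the coordinate verification in step one is routine case-checking, and the final cyclic telescoping is automatic once the one-sided constraint is in hand.
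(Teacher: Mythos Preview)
Your argument is correct. Both you and the paper begin by recognising $(Y,c_2)$ as a simple $10$-cycle and by reading the one-step homotopy as the constraint $h(x_i)\in\{x_{i-1},x_i,x_{i+1}\}$. From there the paper fixes the base point at $x_1$, argues (somewhat tersely, ``by $c_2$-continuity'') that $h(x_i)\in\{x_{i-1},x_i\}$ for $i\ge 2$, observes that once $h$ drops to $x_{j_0-1}$ it must stay dropped, and derives a contradiction at the closing edge $x_{10}x_1$ because $x_9\not\sim x_1$. Your shift variables $s_i$ and the inequality $s_{i+1}\le s_i$ make explicit exactly what the paper's inductive propagation is doing, and your cyclic telescoping $s_1\ge s_2\ge\cdots\ge s_{10}\ge s_1$ replaces the paper's endpoint contradiction by a direct collapse to a constant. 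So the substance is the same; your packaging is more symmetric (no need to single out a base point or a direction of travel) and would generalise verbatim to any simple $n$-cycle with $n\ge 4$, whereas the paper's write-up is tied to the chosen orientation and base point.
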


\begin{proof}
For convenience, we prove the statement in the case where $x=x_1$. Since $(Y,c_2)$ is a simple cycle of 10 points, the same argument will work for any other value of $x$.

Since $h$ is $(c_2,c_2)$-homotopic to $1_Y$ in 1 step,
$h(x_i)$ and $x_i$ are $c_2$-adjacent or equal for all $i$.
Suppose $h \neq 1_Y$. Since $h(x_1)=x_1$,
by $c_2$-continuity, $h(x_i) \in \{x_{i-1},x_i\}$ for $2 \leq i \leq 10$,
and since $h \neq 1_Y$, there is a $j_0$ such that $2 \leq j_0 \leq 10$
and $h(x_j)=x_{j-1}$ for
$j_0 \leq j \leq 10$. In particular, $h(x_{10})=x_9$, so we have a discontinuity
since the $c_2$-adjacent points $x_1$ and $x_{10}$ do not have $c_2$-adjacent
images under $h$. Since $h$ was assumed continuous, the contradiction leads
us to conclude that $h=1_Y$.
\end{proof}

A similar argument shows the following.

\begin{Corollary}
\label{Xmust-be-id}
Let $X=\{x_i\}_{i=0}^{10}$ be as above.
Let $h: (X, c_2) \rightarrow (X,c_2)$ be a continuous map such
that $h(x_0) = x_0$ and $h$ is homotopic in 1 step to $1_X$. Then $h=1_X$. \qed
\end{Corollary}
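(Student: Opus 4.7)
The plan is to follow the template of Proposition~\ref{Ymust-be-id}, exploiting the structure of $(X, c_2)$: one checks easily that the points $x_0, x_1, \ldots, x_9$ form a simple 10-cycle under $c_2$-adjacency, and that $x_{10}$ is an extra vertex whose only $c_2$-neighbors in $X$ are $x_1$ and $x_9$ (since $x_{10}=(0,0)$ has distance $2$ from $x_0$ and from $x_2, x_3, \ldots, x_8$). Since $h$ is homotopic to $1_X$ in 1 step, each $h(x_i)$ is equal or $c_2$-adjacent to $x_i$; this is the basic constraint I will combine with $c_2$-continuity of $h$ throughout.

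First I would run the cyclic propagation argument of Proposition~\ref{Ymust-be-id} along the 10-cycle $x_0, x_1, \ldots, x_9$, anchored at $h(x_0)=x_0$. Intersecting the 1-step constraint with continuity on the edges $x_0 x_1$ and $x_0 x_9$ immediately gives $h(x_1) \in \{x_0,x_1\}$ and $h(x_9) \in \{x_0, x_9\}$. A short induction along the arc $x_1, x_2, \ldots, x_9$, exactly as in Proposition~\ref{Ymust-be-id}, shows that once $h(x_j) = x_{j-1}$ for some $j \ge 1$, the shifted pattern $h(x_i) = x_{i-1}$ persists for every $i \ge j$. So any disagreement between $h$ and $1_X$ on this arc forces $h(x_9) = x_8$, contradicting $h(x_9) \in \{x_0, x_9\}$. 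This yields $h(x_i) = x_i$ for $0 \le i \le 9$.

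Finally I would handle $x_{10}$ by direct check. By the 1-step condition, $h(x_{10}) \in \{x_1, x_9, x_{10}\}$, and by $c_2$-continuity at the edges $x_{10} x_1$ and $x_{10} x_9$, $h(x_{10})$ must be equal or $c_2$-adjacent to each of $h(x_1)=x_1$ and $h(x_9)=x_9$. Since $x_1$ and $x_9$ are not themselves $c_2$-adjacent, the only element of $\{x_1, x_9, x_{10}\}$ meeting both constraints is $x_{10}$, so $h(x_{10}) = x_{10}$ and $h = 1_X$.

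The main obstacle, compared with Proposition~\ref{Ymust-be-id}, is precisely the presence of the off-cycle point $x_{10}$: the cyclic propagation only pins $h$ down on $\{x_0, \ldots, x_9\}$, and a separate short local argument at $x_{10}$ is required to finish. Within the cycle itself, the argument is essentially a relabelled copy of the one given for $Y$, with the base point $x_0$ playing the role that $x_1$ played there.
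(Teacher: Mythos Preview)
Your argument is correct and follows the route the paper intends: the paper does not spell out a proof of this corollary, merely saying ``A similar argument shows the following'' in reference to Proposition~\ref{Ymust-be-id}. Your cyclic propagation along $x_0,\dots,x_9$ is exactly that similar argument, and you correctly identify and handle the one extra wrinkle the paper glosses over---the off-cycle point $x_{10}$, which is not present in the simple 10-cycle $Y$ and requires the short local check you give.
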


\begin{Proposition}
\label{proof-of-example}
Let $X= \{x_i\}_{i=0}^{10}$ and $Y=X\setminus \{x_0\}$ be as above.  Then for any
$x \in X$ and $y \in Y$,
$(X,x)$ and $(Y, y)$ are not pointed $(c_2,c_2)$-homotopy equivalent.
\end{Proposition}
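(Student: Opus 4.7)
I plan to argue by contradiction. Suppose there exist pointed continuous maps $F: (X,x)\to(Y,y)$ and $G: (Y,y)\to(X,x)$ and pointed homotopies $G\circ F\simeq 1_X$ (basepoint $x$) and $F\circ G\simeq 1_Y$ (basepoint $y$). The goal is to deduce $F\circ G=1_Y$ and $G\circ F=1_X$, which forces $F$ and $G$ to be mutually inverse bijections and contradicts $|X|=11\neq 10=|Y|$.

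The engine is a stepwise induction that promotes Proposition~\ref{Ymust-be-id} and Corollary~\ref{Xmust-be-id} from 1-step to multi-step pointed homotopies. Given a pointed homotopy $H: Y\times[0,m]_{\bf Z}\to Y$ from $1_Y$ to $F\circ G$ with $H(y,t)=y$ for all $t$, each time slice $H_t=H(\cdot,t)$ fixes $y$, and $H_t,H_{t+1}$ are 1-step homotopic via the restriction of $H$ to $Y\times[t,t+1]_{\bf Z}$. Starting from $H_0=1_Y$, Proposition~\ref{Ymust-be-id} applied to $H_1$ (which fixes $y$ and is 1-step homotopic to $H_0=1_Y$) yields $H_1=1_Y$; iterating, $F\circ G=H_m=1_Y$. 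An identical induction using Corollary~\ref{Xmust-be-id} gives $G\circ F=1_X$, and the cardinality contradiction follows.

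The main obstacle is that Corollary~\ref{Xmust-be-id} is stated only for the basepoint $x_0$, whereas the proposition requires the conclusion for arbitrary $x\in X$. I would close this gap by redoing the case analysis of Proposition~\ref{Ymust-be-id}'s proof for each choice of basepoint in $X$, using the fact that $X$ is a 10-cycle $x_1 x_2\cdots x_{10}x_1$ with the extra vertex $x_0$ attached only to $x_1$ and $x_9$. The 1-step condition forces $h(x_j)$ to be either $x_j$ or $c_2$-adjacent to $x_j$; starting from the fixed point $h(x)=x$, this constraint propagates around the main 10-cycle. Any nontrivial cyclic shift would eventually create an inconsistency, either at $x_{10}$ (whose only $X$-neighbors are $x_1$ and $x_9$, just as in the proof of Proposition~\ref{Ymust-be-id}) or at the two adjacency constraints imposed on $h(x_0)$ by $h(x_1)$ and $h(x_9)$. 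Thus $h=1_X$ in every case, and the induction above closes the argument.
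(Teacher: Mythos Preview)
Your approach is essentially the same as the paper's: assume a pointed homotopy equivalence exists, use the 1-step rigidity results inductively along the stages of each pointed homotopy to force $F\circ G=1_Y$ and $G\circ F=1_X$, and reach a cardinality contradiction. You are in fact more careful than the paper on two points: the paper's proof swaps the citations to Proposition~\ref{Ymust-be-id} and Corollary~\ref{Xmust-be-id}, and it never explicitly addresses the gap you flag---that Corollary~\ref{Xmust-be-id} is stated only for the basepoint $x_0$ while Proposition~\ref{proof-of-example} allows arbitrary $x\in X$---which your proposed case analysis (exploiting that $x_0$ and $x_{10}$ each have exactly the neighbors $x_1,x_9$) correctly handles.
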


\begin{proof}
Suppose otherwise. Then for some $x \in X$ and $y \in Y$, there are
$(c_2,c_2)$-continuous pointed
maps $f: (X,x) \rightarrow (Y, y)$ and $g: (Y,y) \rightarrow (X, x)$
such that $f \circ g$ is pointed homotopic to $1_X$ and
$g \circ f$ is pointed homotopic to $1_Y$.  

First we argue that $g\circ f$ must in fact equal $1_X$. Since $f$ and $g$ are pointed maps we have $g\circ f(x) = x$, and our pointed homotopy from $g\circ f$ to $1_X$ will fix $x$ at all stages. If $g\circ f$ were not $1_X$, then there would be some final stage $h$ of the pointed homotopy from $g\circ f$ to $1_X$ for which $h \neq 1_X$ but $h$ is pointed homotopic to $1_X$ in one step. This is impossible by Proposition \ref{Ymust-be-id}, and so we conclude that $g\circ f = 1_X$. Similarly, using Corollary \ref{Xmust-be-id}, we have $f\circ g = 1_Y$. 

Since $f \circ g = 1_Y$ and $g\circ f = 1_X$, it follows that $X$ and $Y$ are $(c_2,c_2)$-isomorphic images, which is impossible, as $X$ and $Y$ have different cardinalities. The assertion
follows. 
\end{proof}

Example \ref{the-ex} is an image in $\mathbf Z^2$ with $c_2$-adjacency that exhibits interesting pointed homotopy properties. We remark that images exist in $\mathbf Z^2$ with $c_1$-adjacency with similar properties. The image in Figure \ref{4adj-ex} exhibits the same behavior as that of Example \ref{the-ex}. 

\begin{figure}
\begin{center}
\begin{tikzpicture}[scale=.3]
	\filldraw[fill=white, xshift=2cm,yshift=2cm]
		(45:1.2) \foreach \x in {135,225,315,45} { -- (\x:1.2) };
	\filldraw[fill=white, xshift=2cm,yshift=4cm]
		(45:1.2) \foreach \x in {135,225,315,45} { -- (\x:1.2) };
	\filldraw[fill=white, xshift=2cm,yshift=6cm]
		(45:1.2) \foreach \x in {135,225,315,45} { -- (\x:1.2) };
	\filldraw[fill=white, xshift=2cm,yshift=8cm]
		(45:1.2) \foreach \x in {135,225,315,45} { -- (\x:1.2) };
	\filldraw[fill=white, xshift=4cm,yshift=2cm]
		(45:1.2) \foreach \x in {135,225,315,45} { -- (\x:1.2) };
	\filldraw[fill=white, xshift=4cm,yshift=8cm]
		(45:1.2) \foreach \x in {135,225,315,45} { -- (\x:1.2) };
	\filldraw[fill=white, xshift=6cm,yshift=2cm]
		(45:1.2) \foreach \x in {135,225,315,45} { -- (\x:1.2) };
	\filldraw[fill=white, xshift=6cm,yshift=6cm]
		(45:1.2) \foreach \x in {135,225,315,45} { -- (\x:1.2) };
	\filldraw[fill=white, xshift=6cm,yshift=8cm]
		(45:1.2) \foreach \x in {135,225,315,45} { -- (\x:1.2) };
	\filldraw[fill=white, xshift=8cm,yshift=2cm]
		(45:1.2) \foreach \x in {135,225,315,45} { -- (\x:1.2) };
	\filldraw[fill=white, xshift=8cm,yshift=4cm]
		(45:1.2) \foreach \x in {135,225,315,45} { -- (\x:1.2) };
	\filldraw[fill=white, xshift=8cm,yshift=6cm]
		(45:1.2) \foreach \x in {135,225,315,45} { -- (\x:1.2) };
	\filldraw[fill=white, xshift=8cm,yshift=8cm]
		(45:1.2) \foreach \x in {135,225,315,45} { -- (\x:1.2) };
\end{tikzpicture}
\end{center}
\caption{An image in $Z^2$ with $c_1$-adjacency having the same properties as in Example \ref{the-ex}.\label{4adj-ex}}
\end{figure}
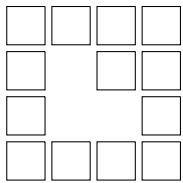

Let $X$ be the digital image in Example \ref{the-ex}, and define two loops $f,g: [0,10]_Z \to X$ as follows:
\begin{align*}
f &= (x_1, x_2, \dots, x_9, x_{10}, x_1) \\
g &= (x_1, x_2, \dots, x_9, x_0, x_1) 
\end{align*}

These loops are equivalent in $\Pi_1(X,x_1)$: consider the following trivial extensions
\begin{align*}
f' &= (x_1, x_2, x_3, \dots, x_9, x_{10}, x_1, x_1) \\
g' &= (x_1, x_1, x_2, \dots, x_8, x_9, x_0, x_1)
\end{align*}
These loops $f'$ and $g'$ are homotopic in one step, and so $f$ and $g$ are equivalent in $\Pi_1(X,x_1)$. Notice that the one-step equivalence above uses trivial extensions at the base point $x_1$. That is, there is some $t$ with $f'(t)=f'(t+1) = x_1$, and likewise for $g'$. In fact this is necessary for any equivalence between $f$ and $g$, as the following proposition shows:
\begin{Proposition}
Let $X$ be as in Example~\ref{the-ex}. Let $f$ and $g$ be the loops described above.
Let $f',g':[0,k]_Z \to X$ be trivial extensions of $f$ and $g$ that are homotopic by $H(t,s):[0,k]_Z \times [0,n]_Z \to X$. Then there is some time $p \in [0,n]_Z$ 
and intermediate stage of the homotopy $H$, i.e., 
$h: [0,k]_{\Z} \rightarrow X$ defined by
$h(t) = H(t,p)$,  such that $h(k-1)=h(k)= x_1$.
Similarly there is some $q \in [0,n]_Z$ and intermediate stage of the
homotopy $H$, i.e., $l: [0,k]_{\Z} \rightarrow X$ defined by $l(t)=H(t,q)$,
such that $l(0)=l(1)=x_1$. 
\end{Proposition}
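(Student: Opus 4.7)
My plan is to analyze the values $H(k-1, s)$ as $s$ ranges over $[0, n]_\Z$, using the strict $c_2$-adjacency structure of $X$ near the basepoint $x_1$ combined with the order constraints imposed by trivial extensions. Each intermediate stage $H(\cdot, s)$ is a loop based at $x_1$, so $H(0, s) = H(k, s) = x_1$, and $(c_1, c_2)$-continuity in $t$ forces $H(k-1, s)$ to be $x_1$ or $c_2$-adjacent to $x_1$. A direct check of coordinates shows that the $c_2$-neighbors of $x_1$ in $X$ are exactly $x_0$, $x_2$, and $x_{10}$, so $H(k-1, s)$ always lies in $\{x_0, x_1, x_2, x_{10}\}$. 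Within $X$, the only $c_2$-adjacencies among these four points are $x_0 x_1$, $x_1 x_2$, and $x_1 x_{10}$ — a star with center $x_1$ and leaves $x_0, x_2, x_{10}$.

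I then pin down the endpoints of the sequence $\alpha(s) := H(k-1, s)$. Since $f'$ is a trivial extension of $f = (x_1, x_2, \ldots, x_9, x_{10}, x_1)$, the value $f'(k-1)$ must come from $f$'s trajectory; together with the adjacency constraint on $\alpha$, the only candidates are $x_1$, $x_2$, and $x_{10}$. The value $x_2$ is excluded because then the remainder of $f$'s trajectory $x_3, x_4, \ldots, x_{10}$ would have to be squeezed into the single remaining position $k$ before $f'(k) = x_1$, which is impossible. Hence $f'(k-1) \in \{x_1, x_{10}\}$ and analogously $g'(k-1) \in \{x_0, x_1\}$. If either equals $x_1$ then $p = 0$ or $p = n$ already witnesses the conclusion; otherwise $\alpha$ is a $(c_1, c_2)$-continuous path in the star, starting at the leaf $x_{10}$ and ending at the leaf $x_0$, so it must traverse the center $x_1$ at some intermediate time $p$, giving $h(k-1) = h(k) = x_1$.

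The second assertion about some $l$ with $l(0) = l(1) = x_1$ is obtained by applying the same case analysis at position $1$ rather than at position $k-1$: one constrains $H(1, s)$ to lie in $\{x_0, x_1, x_2, x_{10}\}$, extracts analogous restrictions on $f'(1)$ and $g'(1)$ from the initial portions of the trajectories of $f$ and $g$, and invokes the same star structure around $x_1$ to force a traversal of the center at some stage $q$. I expect the main obstacle to be correctly pinning down the restricted value sets coming from the trivial-extension structure and carefully verifying that the induced subgraph on $\{x_0, x_1, x_2, x_{10}\}$ really is a star with center $x_1$; once these combinatorial observations are in place, the required transition through $x_1$ falls out of elementary continuity of the homotopy in $s$.
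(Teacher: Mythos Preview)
Your proof is correct and follows the same overall strategy as the paper: both track the path $s\mapsto H(k-1,s)$ from $f'(k-1)$ to $g'(k-1)$ and argue it must hit $x_1$. The difference is only in the final combinatorial step. You first use the adjacency $H(k-1,s)\leftrightarrow H(k,s)=x_1$ to confine the entire path to the star $\{x_0,x_1,x_2,x_{10}\}$ and then observe that any path between two distinct leaves of this star must pass through the center $x_1$. The paper instead argues by contradiction: assuming $H(k-1,s)\neq x_1$ for all $s$, it lets the path run through all of $X\setminus\{x_1\}$, notes that any such path from $x_{10}$ to $x_0$ must visit $x_9$, and then contradicts continuity in $t$ since $x_9$ is not adjacent to $x_1$. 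Your localized version is a bit more self-contained (no appeal to the global shape of $X\setminus\{x_1\}$), and your explicit exclusion of $x_2$ as a possible value of $f'(k-1)$ makes rigorous a step the paper leaves implicit; the paper's version is marginally shorter.
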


\begin{proof}
We will prove the first statement; the second follows similarly. Suppose
that no intermediate loop $h$ obeys $h(k-1)=h(k)=x_1$. 
Then we have $H(k-1,s) \neq x_1$ for all $s$. We must in particular have
$f'(k-1) \neq x_1$, and so $f'(k-1) = x_{10}$ since $f'$ is a trivial extension
of $f$. 

Thus, considering $H(k-1,s)$ for various $s$ gives a path from 
$H(k-1,0) = f'(k-1) = x_{10}$ to $g'(k-1) = x_0$ which never 
passes through $x_1$. Because of the structure of our image $X$, 
this path must at some point pass through $x_9$. 
Thus there is some $r$ with $H(k-1,r) = x_9$. But $H(k,r) = x_1$
since all stages of $H$ are loops at $x_1$. This contradicts continuity 
of $H$ from $H(k-1,r)$ to $H(k,r)$ since $x_9$ is not adjacent to $x_1$ in $X$. 
\end{proof}

Thus we see that $f$ and $g$ are equivalent as loops in $\Pi_1(X,x_1)$, but this equivalence requires trivial extensions at the base point. This suggests a finer equivalence relation than the one used for the fundamental group, one in which loops are equivalent only by homotopies that do not extend the base point. Specifically, we call a loop $f$ \emph{tight at the basepoint (TAB)} $x_0$ when there is no $t$ with $f(t)=f(t+1)=x_0$. Two TAB loops are called TAB equivalent when there are TAB trivial extensions that are homotopic by a homotopy which is TAB in each stage.

Thus our example loops $f$ and $g$ above are equivalent in $\Pi_1(X,x_1)$, but not TAB equivalent, because any homotopy of trivial extensions must have a non-TAB intermediate stage. The equivalence classes using the TAB relation seem to have interesting and subtle structure, but they do not naturally form a group with respect to the product operation, as we show below.

Consider the product of $f$ and the reverse of $g$, which has the form:
\[ f*g^{-1} = (x_1,x_2, \dots, x_9,x_{10},x_1,x_0,x_9,\dots,x_2,x_1) \]
Note that $f*g^{-1}$ is nullhomotopic, using only TAB loops as intermediate steps. The first step of the nullhomotopy is as follows:
\begin{align*}
 (x_1,x_2, \dots, x_9,x_{10},x_1,x_0,x_9,\dots,x_2,x_1) \mbox{ to}\\
 (x_1,x_2, \dots, x_9,x_9,x_0,x_0,x_9,\dots,x_2,x_1),
\end{align*}
and then the loop deforms continuously to the constant map
$(x_1,x_1, \ldots, x_1)$ in an obvious way.

Since $f$ and $g$ are not TAB equivalent, but $f*g^{-1}$ is pointed nullhomotopic, the TAB relation, which is finer than the equivalence used in $\Pi_1(X,x_1)$, cannot be used to define a group. Nevertheless the TAB equivalence provides subtle and interesting information about loops in our space.

\section{A new formulation of the fundamental group}
\label{ec-section}
The equivalence relation of Definition~\ref{loop-class} used to define the fundamental group relies on trivial extensions, which are often cumbersome to handle. In this section we give an equivalent definition of the fundamental group which does not require trivial extensions. Our construction instead is based on \emph{eventually constant paths}. Let $\N= \{1,2,\dots\}$ denote the natural numbers,
and $\N^* = \{0\} \cup \N$. We consider $\N^*$ to be a digital image with 2-adjacency.

\begin{Definition}
Given a digital image $X$, a continuous function $f:\N^* \to X$ is called an \emph{eventually constant path} or \emph{EC path} if there is some point $c\in X$ and some $N \geq 0$ such that $f(x)=c$ whenever $x\geq N$. When convenient we abbreviate the latter by $f(\infty) = c$. The \emph{endpoints} of an EC path $f$ are the two points $f(0)$ and $f(\infty)$. 

If $f$ is an EC path and $f(0)=f(\infty)$, we say $f$ is an \emph{EC loop}, and $f(0)$ is called the basepoint.

We say that a homotopy $H$ between EC paths is an \emph{EC homotopy} when the function $H_t: \N^* \rightarrow X$ defined by
$H_t(s) = H(s,t)$ is an EC path for all $t \in [0,k]_{\Z}$. To indicate an EC homotopy, we write $f \echtp g$, or $f \echtp_{\kappa} g$ if it is desirable to
state the adjacency $\kappa$ of $X$.
We say an EC homotopy $H$ \emph{holds the endpoints fixed} when $H_t(0) = f(0) = g(0)$ and there is a $c \in \N^*$ such that
$n \geq c$ implies $H_t(n) = f(n) = g(n)$ for all $t$. 
$\Box$
\end{Definition}

Not all homotopies of EC paths are EC homotopies, as the following example shows.

\begin{Example}
\label{wobbler}
Let $f,g: \N^* \rightarrow [0,1]_{\Z}$ be defined by
$f(0)=g(0)=0$, $f(n)=g(n)=1$ for $n>0$.
Let $H: \N^* \times [0,2]_{\Z} \rightarrow [0,1]_{\Z}$ be defined by
$H_0=H_2=f=g$,
$ H_1(s)=0$ if $s$ is even, $H_1(s)= 1$ if $s$ is odd. Then $H$ is a homotopy 
from $f$ to $g$ that is not an EC homotopy.
\end{Example}

\begin{proof} It is easy to see that $H$ is a homotopy. However, $H_1$ 
is not an EC path.  The assertion follows.
\end{proof}

A familiar argument shows that EC homotopy is an equivalence relation.

\begin{Proposition}
\label{EC-equiv-relation}
EC homotopy and EC homotopy holding the endpoints fixed are equivalence relations among EC paths.
\end{Proposition}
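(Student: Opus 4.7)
The plan is to verify the three axioms of an equivalence relation (reflexivity, symmetry, transitivity) for both EC homotopy and for EC homotopy holding the endpoints fixed. In each case the construction mirrors the standard proof for classical homotopy, with the extra task of checking that every intermediate stage of the homotopy one writes down is itself an EC path (and, where relevant, agrees with $f$ and $g$ at and beyond the ``fixed'' constant).

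For reflexivity, given an EC path $f : \N^* \to X$, I would exhibit the constant homotopy $H : \N^* \times [0,1]_{\Z} \to X$ defined by $H(s,t) = f(s)$. Each stage $H_t$ is literally $f$, hence EC, and the endpoint condition is trivially satisfied with the same constant $c$ witnessing that $f$ is EC. For symmetry, given an EC homotopy $H : \N^* \times [0,m]_{\Z} \to X$ from $f$ to $g$, define $K(s,t) = H(s, m - t)$. Then $K$ is continuous (being a composition of $H$ with a continuous reindexing of the second coordinate) and each stage $K_t = H_{m-t}$ is EC. If $H$ holds the endpoints fixed with witness $c$, then the same $c$ works for $K$.

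For transitivity, given an EC homotopy $H : \N^* \times [0,m]_{\Z} \to X$ from $f$ to $g$ and an EC homotopy $K : \N^* \times [0,n]_{\Z} \to X$ from $g$ to $h$, define the concatenation $L : \N^* \times [0, m+n]_{\Z} \to X$ by
\[
  L(s,t) = \begin{cases} H(s,t) & \text{if } 0 \leq t \leq m, \\ K(s, t-m) & \text{if } m \leq t \leq m+n. \end{cases}
\]
Agreement at $t = m$ holds because $H(s,m) = g(s) = K(s,0)$, so $L$ is well defined. Continuity of $L$ in each variable follows from continuity of $H$ and $K$; each stage $L_t$ equals either $H_t$ or $K_{t-m}$, hence is an EC path. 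For the endpoint-fixing version, if $H$ and $K$ have witnesses $c_H$ and $c_K$ respectively, then $c = \max(c_H, c_K)$ is a witness for $L$.

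No step presents a real obstacle; this is the ``familiar argument'' alluded to just before the statement. The only mild point of care is to confirm that the EC property is preserved at every intermediate stage of each construction, which in each case reduces to the fact that the stages of $L$ (or $K$) are, up to reparametrization, stages of the given EC homotopies $H$ and $K$.
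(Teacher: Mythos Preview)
Your proposal is correct and follows essentially the same approach as the paper: both verify reflexivity, symmetry, and transitivity via the constant homotopy, the time-reversed homotopy $H(s,m-t)$, and the concatenation $L$ of two homotopies, respectively. Your treatment is arguably slightly more careful, since you explicitly track the witness $c$ for the endpoint-fixing version and use $[0,1]_{\Z}$ rather than $\{0\}$ for the reflexivity homotopy, but the substance is identical.
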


\begin{proof} We give a proof without the assumption of endpoints being
held fixed.  The same argument can be used with obvious modifications to
obtain the assertion for endpoints held fixed.

{\em Reflexive}: Given an EC path $f: \N^* \rightarrow X$, clearly the function
$H: \N^* \times \{ 0 \} \rightarrow X$ given by
$H(x,0) = f(x)$ shows $f \echtp f$.

{\em Symmetric}: If $H: \N^* \times [0,m]_{\Z} \rightarrow X$ is an EC homotopy from
$f$ to $g$, then it is easy to see that the function
$H': \N^* \times [0,m]_{\Z} \rightarrow X$ defined by
\[ H'(x,t) = \left \{ \begin{array}{ll}
                      H(x,m-t) & \mbox{if } 0 \leq t \leq m; \\
                      f(0) & \mbox{if } t \geq m,
                      \end{array} \right .
 \]
 shows $g \echtp f$.

{\em Transitive}: Suppose $H: \N^* \times [0,m_1]_{\Z} \rightarrow X$ is 
an EC homotopy from $f$ to $g$, and $K : \N^* \times [0,m_2]_{\Z} \rightarrow X$
is an EC homotopy from $g$ to $h$.  Then the function
$L: \N^* \times [0,m_1+m_2]_{\Z} \rightarrow X$ defined by
\[ L(x,t) = \left \{ \begin{array}{ll}
                   H(x,t) & \mbox{if } 0 \leq t \leq m_1; \\
                   K(x,t-m_1) & \mbox{if } m_1 \leq t \leq m_2,
                   \end{array} \right .
\]
is an EC homotopy from $f$ to $h$.
\end{proof}

Homotopy of trivial extensions of loops can be easily stated in terms of EC homotopy of the corresponding EC loops. The latter formulation is preferable since it does not require trivial extensions, which obviates the need for several technical lemmas. For example the proof given below for Proposition \ref{EC-mult-well-defined} is much easier than the corresponding statement for trivial extensions (see \cite[Proposition 4.8]{Boxer}, which is only a sketch of a proof from \cite{Khalimsky}); and the proof given below for
Theorem~\ref{Pi1-iso-thm} is somewhat simpler, being
based on EC homotopy, than it would have been had we had to
construct trivial extensions.

Given a path $f: [0,m]_{\Z} \rightarrow X$, we denote by
$f_{\infty}: \N^* \rightarrow X$ the function defined by
\[ f_{\infty}(n) = \left \{ \begin{array}{ll}
             f(n) & \mbox{if } 0 \leq n \leq m; \\
             f(m) & \mbox{if } n \geq m.
      \end{array} \right .
\]

Given an EC path $g: \N^* \rightarrow X$, let 
\[ N_g = \min\{ m \in \N^* \, | \, n \geq m \mbox{ implies } g(n)=g(m) \} \] 
and let $g_-: [0,N_g]_{\Z} = g|_{[0,N_g]_{\Z}}$. We have the following.

\begin{Proposition}
\label{ec-extension-restriction}
Let $X$ be a digital image.

\rm{a)} Let $f: \N^* \rightarrow X$ be an EC path. Then $(f_-)_{\infty}=f$.

\rm{b)} Let $f: [0,m]_{\Z} \rightarrow X$ be a path in $X$. Then $f$ is a trivial extension of $(f_{\infty})_-$. We have $f=(f_{\infty})_-$ if and only if either $m=0$ or $m>0$ and $f(m-1)\neq f(m)$. 
\end{Proposition}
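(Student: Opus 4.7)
The plan is to unwind the definitions of the operations $(-)_\infty$ and $(-)_-$ and verify the claims by direct computation. Both parts reduce to understanding where the threshold $N_f$ (respectively $N_{f_\infty}$) sits and using the fact that, by construction, the function becomes constant at that threshold.

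For part (a), I would first note that since $f$ is an EC path, the set $\{m : n \geq m \implies f(n) = f(m)\}$ is nonempty, so $N_f$ is well defined. Then $f_- = f|_{[0,N_f]_{\Z}}$. To check $(f_-)_\infty = f$, I would split into cases. For $n \in [0,N_f]_\Z$, $(f_-)_\infty(n) = f_-(n) = f(n)$ directly. For $n \geq N_f$, $(f_-)_\infty(n) = f_-(N_f) = f(N_f) = f(n)$, the last equality being precisely what the definition of $N_f$ guarantees.

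For part (b), I would begin by observing that $N_{f_\infty} \leq m$: indeed, the value $k = m$ satisfies the condition $n \geq k \implies f_\infty(n) = f_\infty(k)$ by the very definition of $f_\infty$. Thus $(f_\infty)_-$ is defined on $[0,N_{f_\infty}]_\Z$, a subinterval of $[0,m]_\Z$, and agrees with $f$ on that subinterval. Moreover, by the minimality of $N_{f_\infty}$, $f_\infty$ is constant on $[N_{f_\infty},\infty)$, so $f$ itself is constant on $[N_{f_\infty},m]_\Z$. Writing $f = (f_\infty)_- * c$, where $c$ is the constant path of length $m - N_{f_\infty}$ at the value $f(N_{f_\infty})$, exhibits $f$ as a trivial extension of $(f_\infty)_-$ in the sense of Definition \ref{triv-extension}.

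For the equality statement, I would argue both directions. If $m=0$ then $f_\infty$ is the constant function, so $N_{f_\infty}=0=m$ and $(f_\infty)_- = f$. If $m>0$ and $f(m-1)\neq f(m)$, then $N_{f_\infty}$ cannot be $\leq m-1$: if it were, the constancy of $f_\infty$ past $N_{f_\infty}$ would give $f(m-1) = f_\infty(m-1) = f_\infty(m) = f(m)$, a contradiction; hence $N_{f_\infty} = m$ and again $(f_\infty)_- = f$. Conversely, if $m>0$ and $f(m-1)=f(m)$, then the set used to define $N_{f_\infty}$ contains $m-1$, so $N_{f_\infty}\leq m-1 < m$ and the domains of $f$ and $(f_\infty)_-$ differ, so $f \neq (f_\infty)_-$. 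No step is a genuine obstacle; the only subtlety is keeping careful track of which function is being evaluated on which interval and invoking the minimality of $N_{f_\infty}$ in the right place.
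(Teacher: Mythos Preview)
Your proposal is correct and follows the same approach as the paper, which simply states that ``these assertions are immediate consequences of the definitions above.'' You have supplied the routine verification that the paper omits, and every step checks out.
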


\begin{proof} These assertions are immediate consequences of the
definitions above.
\end{proof}

\begin{Lemma}\label{ec-homotopy}
Let $f,g:[0,m]_\Z \to X$ be paths with $f\simeq g$. Then $f_\infty \echtp g_\infty$. If the homotopy from $f$ to $g$ holds the endpoints fixed, then so does the induced EC homotopy from $f_\infty$ to $g_\infty$.
\end{Lemma}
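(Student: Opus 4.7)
The plan is to take the given homotopy $H:[0,m]_\Z \times [0,k]_\Z \to X$ between $f$ and $g$ and extend it horizontally to an EC homotopy $\widetilde H:\N^* \times [0,k]_\Z \to X$ by keeping the value at the endpoint $t=m$ constant beyond $n=m$. Explicitly, I would define
\[ \widetilde H(n,t) = \begin{cases} H(n,t) & \text{if } 0 \leq n \leq m, \\ H(m,t) & \text{if } n \geq m. \end{cases} \]
The idea is that $H_t$ is a path from $f(0)$ to some point (namely $H(m,t)$), and extending $H_t$ by the constant value $H(m,t)$ for $n \geq m$ yields an EC path whose $\infty$-value is $H(m,t)$. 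At $t=0$ this recovers $f_\infty$ and at $t=k$ this recovers $g_\infty$.

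Next I would verify the three continuity/EC conditions. First, for each fixed $t$, the slice $\widetilde H_t$ is continuous on $\N^*$: on $[0,m]_\Z$ this is just $H_t$, on $[m,\infty)$ it is constant, and continuity at the joining index $n=m$ is automatic. Also $\widetilde H_t$ is eventually constant with value $H(m,t)$, so it is an EC path. Second, for each fixed $n$, the slice $\widetilde H_n$ is continuous in $t$: for $n \leq m$ it equals the continuous function $t \mapsto H(n,t)$, and for $n \geq m$ it equals the continuous function $t \mapsto H(m,t)$. Hence $\widetilde H$ is a genuine homotopy, and since every stage is EC, it is an EC homotopy showing $f_\infty \echtp g_\infty$.

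For the second assertion, suppose $H$ holds the endpoints fixed, so $H(0,t)=f(0)=g(0)$ and $H(m,t)=f(m)=g(m)$ for all $t$. Then $\widetilde H_t(0) = H(0,t) = f(0) = f_\infty(0) = g_\infty(0)$, and for $n \geq m$ we have
\[ \widetilde H_t(n) = H(m,t) = f(m) = f_\infty(n) = g_\infty(n). \]
Taking the constant $c=m$ in the definition of ``EC homotopy holding the endpoints fixed'' gives the required conclusion.

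I do not expect any real obstacle here; the only point that needs a sanity check is continuity of $\widetilde H$ at the seam $n=m$, which is immediate because the two piecewise expressions agree there. The construction is essentially a trivial extension in the first coordinate carried out uniformly in the homotopy parameter $t$.
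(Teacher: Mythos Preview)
Your proposal is correct and is essentially identical to the paper's proof: the paper defines the same extension $G(s,t)=H(s,t)$ for $s\le m$ and $G(s,t)=H(m,t)$ for $s>m$, and simply asserts that this is an EC homotopy (and endpoint-preserving when $H$ is). Your write-up is in fact more detailed than the paper's, which omits the continuity checks you spell out.
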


\begin{proof}
Let $H:[0,m]_\Z \times [0,k]_\Z \to X$ be a homotopy of $f$ to $g$. Consider $G: \N^* \times [0,k]_\Z \to X$, defined as follows:
\[
G(s,t) = \begin{cases}
H(s,t) & \text{ if } s \le m \\
H(m,t) & \text{ if } s > m.
\end{cases} 
\]
Clearly $G$ is an EC homotopy of $f_\infty$ to $g_\infty$. Further, $G$ holds the
endpoints fixed if $H$ does so.
\end{proof}

\begin{Lemma}
\label{cutoff}
Let $f$ and $g$ be EC homotopic EC paths in $X$.  
Then $f_-$ and $g_-$ have homotopic trivial extensions.
If $f$ and $g$ are homotopic holding the endpoints fixed,
then $f_-$ and $g_-$ have trivial extensions that are homotopic holding
the endpoints fixed.
\end{Lemma}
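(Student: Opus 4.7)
The plan is to find a single finite cutoff $M$ that dominates the ``eventually-constant threshold'' of every stage of the given EC homotopy, and then restrict everything to $[0,M]_\Z$ to produce the desired homotopy of trivial extensions.

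First I would use that, by hypothesis, the EC homotopy $H : \N^* \times [0,k]_\Z \to X$ from $f$ to $g$ has an EC path $H_t$ at every stage, so $N_{H_t}$ is a well-defined natural number for each $t$. Since $t$ ranges over the finite set $[0,k]_\Z$, I can set
\[ M = \max\{N_{H_t} : t \in [0,k]_\Z\}. \]
Because the cases $t=0$ and $t=k$ give $M \geq N_f$ and $M \geq N_g$, the restrictions $\bar f = f|_{[0,M]_\Z}$ and $\bar g = g|_{[0,M]_\Z}$ are trivial extensions of $f_-$ and $g_-$ respectively (each is constant on the portion of $[0,M]_\Z$ past its own threshold, by the very definition of $N_f$ and $N_g$). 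I would then take $\bar H = H|_{[0,M]_\Z \times [0,k]_\Z}$ and note that continuity of $\bar H$ in each variable separately is inherited immediately from $H$, so $\bar H$ is a $(\kappa,\kappa)$-homotopy from $\bar f$ to $\bar g$. This establishes the first claim.

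For the second claim, suppose $H$ holds the endpoints fixed, with constant $c \in \N^*$ such that $H_t(n) = f(n) = g(n)$ for all $n \geq c$ and all $t \in [0,k]_\Z$. I would replace $M$ above with $M' = \max(M,c)$ and form $\bar f$, $\bar g$, $\bar H$ on $[0,M']_\Z$ in exactly the same way. Then $\bar H(0,t) = H_t(0) = f(0) = \bar f(0) = \bar g(0)$ for every $t$ (using $H_t(0) = f(0) = g(0)$ from the endpoints-fixed hypothesis), and since $M' \geq c$ we have $\bar H(M',t) = H_t(M') = f(M') = \bar f(M') = \bar g(M')$ for every $t$. Hence the restricted homotopy also holds its endpoints fixed.

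There is no genuinely hard step here; the argument is essentially bookkeeping. The one point I would highlight is that passing from countably many stages of EC behavior to a single uniform threshold $M$ is possible only because the homotopy parameter ranges over a finite digital interval, which lets us take an honest maximum rather than a supremum. This is precisely the sort of technical cleanness the authors advertise for the EC formulation over the trivial-extension one, and it is the reason the proof is so short.
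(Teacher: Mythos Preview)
Your proof is correct and uses the same restrict-to-a-finite-interval idea as the paper. The paper, however, takes the smaller cutoff $\max(N_f,N_g)$ (assuming $N_f\le N_g$) rather than your $M=\max_t N_{H_t}$ and $M'=\max(M,c)$. That smaller cutoff is enough for the first assertion, but the paper's one-line claim that ``if $H$ holds the endpoints fixed, then so does $H'$'' is not actually justified at cutoff $N_g$: an intermediate stage may have $N_{H_t}>N_g$, so $H'(N_g,t)=H_t(N_g)$ need not equal $g(\infty)$. Your enlargement to $M'\ge c$ is exactly what forces the terminal endpoint to remain fixed throughout the restricted homotopy, so your version is in fact slightly more careful than the paper's own argument.
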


\begin{proof} Let $N_f, N_g$ be as defined above. Without loss of generality,
$N_f \leq N_g$. Let $H: \N \times [0,m]_{\Z} \rightarrow X$ be a homotopy from
$f$ to $g$. Let $H': [0,N_g] \times [0,m]_{\Z} \rightarrow X$ be the restriction of $H$ to
$[0,N_g] \times [0,m]_{\Z}$. It is easily seen that $H'$ is a homotopy between a
trivial restriction $f'$ of $f_-$ and the function $g_-$, where
$f': [0,N_g]_{\Z} \rightarrow X$ is defined by
\[ f'(n) = \left \{ \begin{array}{ll}
                  f(n)=f_-(n) & \mbox{if } 0 \leq n \leq N_f; \\
                  f(N_f) & \mbox{if } N_f  \leq n \leq N_g.
                  \end{array} \right .
\]
Further, if $H$ holds the
endpoints fixed, then so does $H'$.
\end{proof}

\begin{Lemma}\label{te-homotopy}
Let $f:[0,m]_\Z\to X$ be a loop and $\bar f:[0,n]_\Z \to X$ be a trivial extension of $f$. Then $f_\infty$ and $\bar f_\infty$ are EC homotopic with fixed endpoints.
\end{Lemma}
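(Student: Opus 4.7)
The plan is to prove the lemma by induction on $k = n - m$, the number of extra steps that $\bar f$ has over $f$. In the base case $k = 0$, unpacking Definition~\ref{triv-extension} forces $\bar f = f$, so $f_{\infty} = \bar f_{\infty}$ and the conclusion is immediate. For the inductive step with $k > 0$, the decomposition $\bar f = F_1 * \cdots * F_p$ supplied by Definition~\ref{triv-extension} must contain at least one inserted trivial loop $F_i$ of positive length, so there exists an index $a$ with $\bar f(a) = \bar f(a+1)$. Define $\tilde f: [0, n-1]_{\Z} \to X$ by $\tilde f(i) = \bar f(i)$ for $i \leq a$ and $\tilde f(i) = \bar f(i+1)$ for $i > a$; shortening the length of $F_i$ by one in the same decomposition shows that $\tilde f$ is itself a trivial extension of $f$, with $k - 1$ extra steps, so the inductive hypothesis gives $\tilde f_{\infty} \echtp f_{\infty}$ with fixed endpoints.

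The heart of the argument is then to establish $\bar f_{\infty} \echtp \tilde f_{\infty}$ with fixed endpoints in this single-pause situation. For this I would construct an explicit one-step EC homotopy $H: \N^* \times [0, 1]_{\Z} \to X$ by setting $H(s, 0) = \tilde f_{\infty}(s)$ and $H(s, 1) = \bar f_{\infty}(s)$. Continuity of each slice $H_s: [0,1]_{\Z} \to X$ reduces to a short case analysis: for $s \leq a$ and for $s \geq n$ the two values $\tilde f_{\infty}(s)$ and $\bar f_{\infty}(s)$ coincide, while for $a < s < n$ they equal $\tilde f(s)$ and $\tilde f(s-1)$ respectively, which are $\kappa$-adjacent or equal by continuity of $\tilde f$. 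Because $H_0$ and $H_1$ are both EC paths by construction, $H$ is an EC homotopy, and checking $H(0, t) = f(0)$ together with $H(s, t) = f(m)$ for all $s \geq n$ confirms that it holds endpoints fixed.

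Applying transitivity of EC homotopy with fixed endpoints from Proposition~\ref{EC-equiv-relation} then completes the induction. The main potential obstacle is the combinatorial verification that deleting one repeated value inside $\bar f$ still produces a valid trivial extension of $f$: this is straightforward once one identifies a decomposition of $\bar f$ whose inserted trivial loop contains the pause at $a$, but it requires a careful unpacking of Definition~\ref{triv-extension}.
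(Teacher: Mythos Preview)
Your argument is correct and is a close variant of the paper's proof. Both proceed by induction and an explicit ``shift'' homotopy that slides the tail of the loop one slot to absorb inserted constant material; the paper inducts on the number of inserted trivial loops and, for a single inserted loop $c$ of length $k$, builds a $k$-step homotopy $H_t=(f_1*c|_{[0,t]_{\Z}}*f_2)_\infty$, whereas you induct on the total excess length $n-m$ and remove one repeated value at a time with a one-step homotopy. Unrolling the paper's $k$-step homotopy gives exactly your sequence of one-step moves, so the two arguments produce the same chain of intermediate loops; your organization has the mild advantage that the one-step homotopy is trivially verified continuous, while the paper's formula requires a (routine but omitted) check of continuity in $t$. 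The only point requiring care in your version---that the index $a$ can be chosen inside an inserted trivial factor so that $\tilde f$ remains a trivial extension of $f$---you have handled correctly by shortening $F_i$ (and dropping it entirely if its length becomes zero).
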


\begin{proof}
We will prove the Lemma in the case that $\bar f$ is obtained from $f$ by inserting a single trivial loop. The full result follows by induction. Specifically, let $f= f_1 * f_2$ and $\bar f = f_1 * c * f_2$, where $c$ is a trivial loop. Say that $f_1:[0,m]_\Z \to X$ and $f_2: [0,n]_\Z \to X$ and $c:[0,k]_\Z \to X$. Then consider  $H: \N^* \times [0,k]_\Z \to X$ given by:
\[ H(s,t) = \begin{cases} 
f_1(s) & \text{ if } 0 \leq s \leq m; \\
c(s-m) & \text{ if } m \le s \le m+t; \\
f_2(s-(m+t)) & \text { if } m+t \le s \le m+t+n; \\
x_0 & \text{ if } m+t+n \le s.
\end{cases}
\]
At time stage $t$ we have
$H_t = (f_1 * c_{|_{[0,t]_{\Z}}} * f_2)_{\infty}$, so
$H$ is an EC homotopy of $f_\infty$ to $\bar f_\infty$ as desired. Further,
$H$ fixes the endpoints, since $H(0,t)=f_1(0)$ for all $t$ and $H(x,t)=f_2(n)$
for all $x \geq m+t+n$ and all $t$.
\end{proof}

\begin{Theorem}\label{ec-te-equiv}
Let $f$ and $g$ be loops in $X$ having some common basepoint $p$. Then there are trivial extensions $\bar f, \bar g$ of $f, g$ respectively with $\bar f \simeq \bar g$ with fixed endpoints if and only if $f_\infty$ and $g_\infty$ are EC homotopic with fixed endpoints.
\end{Theorem}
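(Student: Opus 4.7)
The plan is to prove both directions by chaining together the lemmas already established, rather than constructing new homotopies from scratch. For the forward direction, suppose trivial extensions $\bar f$ and $\bar g$ of $f$ and $g$ exist with $\bar f \simeq \bar g$ by a homotopy fixing endpoints. Lemma~\ref{ec-homotopy} immediately upgrades this to an EC homotopy $\bar f_\infty \echtp \bar g_\infty$ with fixed endpoints. Applying Lemma~\ref{te-homotopy} to the pairs $(f,\bar f)$ and $(g,\bar g)$ yields EC homotopies $f_\infty \echtp \bar f_\infty$ and $g_\infty \echtp \bar g_\infty$ fixing endpoints, and the transitivity provided by Proposition~\ref{EC-equiv-relation} then chains these together to give $f_\infty \echtp g_\infty$ fixing endpoints.

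For the reverse direction, assume $f_\infty \echtp g_\infty$ with fixed endpoints. Lemma~\ref{cutoff} applied to these EC paths produces trivial extensions $f^{\ast}$ of $(f_\infty)_-$ and $g^{\ast}$ of $(g_\infty)_-$ that are homotopic by a homotopy fixing endpoints. By Proposition~\ref{ec-extension-restriction}(b), $f$ and $g$ are themselves trivial extensions of $(f_\infty)_-$ and $(g_\infty)_-$ respectively; however, $f$ and $f^{\ast}$ are in general two different trivial extensions of the same underlying path, so $f^{\ast}$ need not itself be a trivial extension of $f$. To fix this I will pad: choose $M$ at least as large as the domain lengths of $f$, $g$, $f^{\ast}$, and $g^{\ast}$, then extend $f^{\ast}$ and $g^{\ast}$ to length $M$ by appending their terminal constant values, obtaining paths $\bar f$ and $\bar g$. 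The homotopy between $f^{\ast}$ and $g^{\ast}$ extends in the spatial coordinate by the same constant padding, and the result still fixes endpoints.

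The main obstacle is verifying in this second direction that $\bar f$ really is a trivial extension of $f$ (and similarly $\bar g$ of $g$), not merely of $(f_\infty)_-$. The key point is that $f_\infty$ becomes constant at value $f(m)$ by index $m$, so $N_{f_\infty} \le m$ and $f(s) = f(m)$ already for $N_{f_\infty} \le s \le m$. Consequently $\bar f$ agrees with $f$ on $[0,m]_{\Z}$ and continues with the constant $f(m)$ thereafter, which is exactly the definition of a trivial extension. Once this bookkeeping is in place, $\bar f$ and $\bar g$ are trivial extensions of $f$ and $g$ that are homotopic with fixed endpoints, completing the proof. The forward direction, by contrast, is essentially automatic from the cited lemmas and transitivity, requiring no new construction.
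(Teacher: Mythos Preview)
Your forward direction is correct and coincides with the paper's argument: chain $f_\infty \echtp \bar f_\infty \echtp \bar g_\infty \echtp g_\infty$ via Lemmas~\ref{te-homotopy} and~\ref{ec-homotopy} and transitivity.

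For the converse the paper takes a more direct route than you do. Given the EC homotopy $H:\N^*\times[0,k]_\Z\to X$ fixing endpoints, each stage $H_t$ is an EC loop at $p$, so for each $t$ there is $M_t$ with $H(s,t)=p$ for $s\ge M_t$; since there are only finitely many stages, set $M=\max_t M_t$ (enlarged if needed to exceed the domain lengths of $f$ and $g$). Then $\bar f := f_\infty|_{[0,M]_\Z}$ and $\bar g := g_\infty|_{[0,M]_\Z}$ are literally $f$ and $g$ followed by the constant $p$, hence trivial extensions of $f$ and $g$, and $H|_{[0,M]_\Z\times[0,k]_\Z}$ is the required finite homotopy. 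No appeal to Lemma~\ref{cutoff} is needed.

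Your route through Lemma~\ref{cutoff} is workable in spirit but has a gap as written. Lemma~\ref{cutoff}, used as a black box, only promises \emph{some} trivial extensions $f^{*},g^{*}$ of $(f_\infty)_-,(g_\infty)_-$; it does not say they are of the ``pad at the end'' form. Your sentence ``Consequently $\bar f$ agrees with $f$ on $[0,m]_{\Z}$'' tacitly assumes that $f^{*}$ coincides with $(f_\infty)_-$ on $[0,N_{f_\infty}]_\Z$ and is constant $p$ thereafter. A general trivial extension of $(f_\infty)_-$ may insert constant segments anywhere, in which case $\bar f$ need \emph{not} agree with $f$ on $[0,m]_\Z$ (and with your stated bound on $M$, $\bar f$ need not even be a trivial extension of $f$). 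The fix is easy: either observe that the \emph{proof} of Lemma~\ref{cutoff} in fact produces end-padded extensions, so your claim holds for those specific $f^{*},g^{*}$; or bypass Lemma~\ref{cutoff} entirely and restrict the EC homotopy directly as the paper does, which avoids the bookkeeping altogether.
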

\begin{proof}
First we assume that there are trivial extensions $\bar f,\bar g$ with $\bar f\simeq \bar g$ fixing endpoints. Then by Lemmas \ref{te-homotopy} and \ref{ec-homotopy} we have $f_\infty \echtp \bar f_\infty \echtp \bar g_\infty \echtp g_\infty$ and all homotopies fix the endpoints as desired.

For the converse assume that $f_\infty \echtp g_\infty$ with fixed endpoints. Let $H:\N^* \times [0,k]_\Z \to X$ be the EC homotopy. Since $H$ fixes the endpoints (at $p$) and has only finitely many stages, there must be some $M$ such that $H(s,t) = p$ for all $s\ge M$ and for all $t$. 

Let $\bar f,\bar g:[0,M]_\Z \to X$ be the restrictions of $f_\infty,g_\infty$ respectively to $[0,M]_\Z$. Then $\bar f = f*c$ is a trivial extension of $f$, where $c$ is a trivial loop at $p$. Similarly $\bar g$ is a trivial extension of $g$.

Let $\bar H:[0,M]_\Z \times [0,k]_\Z \to X$ be the restriction of $H$ to $[0,M]_\Z \times [0,k]_\Z$. Then $H$ is a homotopy of $\bar f$ to $\bar g$ fixing the endpoints as desired.
\end{proof}

It is natural to overload the $*$ notation as follows.
\begin{Definition}
\label{EC-star}
For $x_0 \in X$, let $f_0, f_1: \N^* \rightarrow X$ be $x_0$-based EC loops in $X$.  
 Define $f_0 * f_1: \N^* \rightarrow X$ by
\[ f_0 * f_1(n) = \left \{ \begin{array}{ll}
                                f_0(n) & \mbox{if } 0 \leq n \leq N_{f_0}; \\
                                f_1(n-N_{f_0}) & \mbox{if } N_{f_0} \leq n.~~~~~ \Box
                                \end{array} \right .
\]
\end{Definition}

It is easily seen that $f_0 * f_1$ is well defined and is an EC loop in $X$. 
The $*$ operator on EC loops has the following properties.

\begin{Proposition}
\label{star-props}
\begin{itemize}
\item Let $f,g: \N^* \rightarrow X$ be $x_0$-based EC loops, for some $x_0 \in X$. Then $f_-*g_-=(f*g)_-$.
\item Let $f: [0,m]_{\Z} \rightarrow X$, $g: [0,n]_{\Z} \rightarrow X$ be $x_0$-based EC loops, for some $x_0 \in X$. Then $f_{\infty} * g_{\infty} = (f*g)_{\infty}$.
\end{itemize}
\end{Proposition}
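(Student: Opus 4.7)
The plan is to prove both assertions by unfolding the relevant definitions and comparing the two sides pointwise. Each bullet is a compatibility statement between the reduction / extension operations $(\cdot)_-$ and $(\cdot)_\infty$ and the two incarnations of the product $*$ (the path concatenation from Section~\ref{dig-htpy} and the EC version in Definition~\ref{EC-star}). The one combinatorial fact that deserves separate attention is that the cutoff $N_{(\cdot)}$ behaves additively under EC concatenation.

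For the first bullet, I would first show $N_{f*g} = N_f + N_g$. By Definition~\ref{EC-star}, $(f*g)(n) = f(n)$ for $n \le N_f$ and $(f*g)(n) = g(n - N_f)$ for $n \ge N_f$. Since $g$ is constantly $x_0$ on $[N_g, \infty)$, we get $(f*g)(n) = x_0$ for all $n \ge N_f + N_g$. Minimality of $N_g$, in the case $N_g > 0$, gives $g(N_g - 1) \neq x_0$, so $(f*g)(N_f + N_g - 1) \neq x_0$, forcing $N_{f*g} = N_f + N_g$. The degenerate case $N_g = 0$ means $g$ is the trivial loop at $x_0$, so $f * g = f$ and $N_{f*g} = N_f = N_f + N_g$ as well. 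The symmetric degenerate case with $N_f = 0$ is handled similarly. Once the two domains coincide as $[0, N_f + N_g]_{\Z}$, checking that $(f*g)_-$ and $f_- * g_-$ agree value-by-value is immediate from the piecewise definitions of the two $*$'s.

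The second bullet admits a parallel direct argument. I would unfold $f_\infty * g_\infty$ via Definition~\ref{EC-star} and $(f * g)_\infty$ by first forming the path concatenation on $[0, m+n]_{\Z}$ and then applying $(\cdot)_\infty$, and then check pointwise agreement on each piece of the domain using $f_\infty(s) = f(s)$ for $s \le m$ together with the analogous identity for $g$. The only real obstacle is index bookkeeping, and in particular the case where $f$ or $g$ has a trivial suffix at the basepoint (so that $N_{f_\infty} < m$); here one uses $g(0) = x_0$ to reconcile the two computations on the overlap. No conceptual input beyond the definitions themselves is required.
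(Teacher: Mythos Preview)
Your argument for the first bullet is correct; the additivity $N_{f*g}=N_f+N_g$ is precisely what makes that identity work, and your case analysis handles it cleanly. This is far more explicit than the paper, which dismisses both items in one line as ``simple consequences of Definition~\ref{EC-star}.''

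The second bullet, however, has a genuine gap, and your proposed patch does not close it. You correctly isolate the problematic case $N_{f_\infty}<m$, but invoking $g(0)=x_0$ reconciles the two sides only at the single index $N_{f_\infty}$, not over the whole stretch $[N_{f_\infty},m]_{\Z}$ where the two computations diverge. In fact the literal equality $f_\infty * g_\infty = (f*g)_\infty$ is false as stated: take $X=[0,1]_{\Z}$, $x_0=0$, let $f:[0,2]_{\Z}\to X$ be the constant loop at $0$, and let $g:[0,2]_{\Z}\to X$ be $(0,1,0)$. Then $N_{f_\infty}=0$, so Definition~\ref{EC-star} gives $f_\infty * g_\infty = g_\infty = (0,1,0,0,\dots)$, whereas $(f*g)_\infty = (0,0,0,1,0,0,\dots)$; these disagree already at $n=1$. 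The paper's one-line proof overlooks this, so the defect lies in the statement rather than in your strategy. What \emph{does} hold, and what suffices for the only downstream use (Theorem~\ref{ec-induced}, where everything sits inside EC homotopy classes), is $f_\infty * g_\infty \echtp (f*g)_\infty$ holding the endpoints fixed, since the two sides differ only by a constant segment at the basepoint (cf.\ the argument of Lemma~\ref{te-homotopy}); the literal equality is valid precisely under the extra hypothesis that $m=0$ or $f(m-1)\neq f(m)$.
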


\begin{proof} These properties are simple consequences of
Definition~\ref{EC-star}.
\end{proof}

\begin{Lemma}\label{EC-mult-half}
Let $f, g, g'$ be EC loops in $X$ at a common basepoint, with $g\echtp g'$ holding the endpoints fixed. Then $f * g \echtp f * g'$ holding the endpoints fixed.
\end{Lemma}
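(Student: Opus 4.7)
The plan is to paste $f$ onto the front of the homotopy $H$ between $g$ and $g'$, in exactly the way that the $*$ operation pastes loops. Concretely, let $H : \N^* \times [0,k]_\Z \to X$ be an EC homotopy from $g$ to $g'$ that holds the endpoints fixed, and let $x_0$ be the common basepoint. Define $K : \N^* \times [0,k]_\Z \to X$ by
\[
K(n,t) = \left\{ \begin{array}{ll}
f(n) & \mbox{if } 0 \le n \le N_f; \\
H(n - N_f,\, t) & \mbox{if } n \ge N_f.
\end{array} \right.
\]
At the overlap $n = N_f$ the two cases agree, since $f(N_f) = x_0$ (as $f$ is an $x_0$-based EC loop) and $H(0,t) = g(0) = x_0$ for all $t$ (as $H$ fixes the endpoints). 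By Definition~\ref{EC-star} this is exactly $K_t = f * H_t$, so in particular $K_0 = f * g$ and $K_k = f * g'$.

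Next I would verify the three conditions in Definition~\ref{htpy-2nd-def}. Continuity of $K_t$ in $n$ reduces to continuity of $f$ on $[0,N_f]_\Z$, continuity of $H_t$ on $\N^*$, and the already-checked match at $n = N_f$. Continuity of $K_n$ in $t$ is immediate: for $n < N_f$ it is the constant $f(n)$, and for $n \ge N_f$ it is $H_{n-N_f}$, which is $(c_1,\kappa)$-continuous because $H$ is a homotopy. For each $t$, the path $K_t = f * H_t$ is EC because $H_t$ is EC; indeed if $H_t(s) = x_0$ for all $s \ge c$, then $K_t(n) = x_0$ for all $n \ge N_f + c$.

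Finally, to check that $K$ holds the endpoints fixed, observe that $K_t(0) = f(0) = x_0$ for all $t$, and that the constant $N_f + c$ from the previous paragraph is independent of $t$, so $n \ge N_f + c$ gives $K_t(n) = x_0 = (f*g)(n) = (f*g')(n)$ for every $t$, which is exactly the fixed-endpoint condition from the definition of EC homotopy.

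There is no real obstacle here: the only delicate point is bookkeeping at the gluing index $n = N_f$, which is handled automatically because $H$ fixes the endpoints at the basepoint $x_0$ shared with $f$. Proposition~\ref{star-props} is in the background, insofar as the construction of $K$ is the natural ``$f * (-)$'' operation applied to the homotopy $H$.
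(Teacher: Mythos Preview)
Your proof is correct and follows exactly the same approach as the paper: the paper defines $L(s,t) = (f * H_t)(s)$ and simply asserts that this is the desired EC homotopy, while you give the same construction and then explicitly verify the continuity, EC, and fixed-endpoint conditions. One small presentational wrinkle: the constant $c$ you introduce in the third paragraph should be taken from the start as the uniform $c$ guaranteed by the hypothesis that $H$ holds the endpoints fixed, rather than introduced per~$t$ and then declared $t$-independent afterward; otherwise the argument is clean.
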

\begin{proof}
Let $H:\N^* \times [0,m] \to X$ be the EC homotopy from $g$ to $g'$, and let $L:\N^* \times [0,m] \to X$ be given by
\[ L(s,t) = (f * H_t)(s). \]
Then $L$ is a EC homotopy from $f*g$ to $f*g'$ holding the endpoints fixed as desired.
\end{proof}

In order to prove Proposition~\ref{EC-mult-well-defined} below,
we must take care in how we mimic the proof of Lemma~\ref{EC-mult-half}
on the first factors of the * products, as shown by the following.

\begin{Example}
\label{htpy-discont-in-t}
Let $f,g: \N^* \rightarrow [0,1]_{\Z}$ be defined by
\[ f(n)=g(n)= \left \{ \begin{array}{ll}
        n & \mbox{if } n \in \{0,1,2\}; \\
        1 & \mbox{if } n=3; \\
         0 & \mbox{if } n > 3.
         \end{array} \right .
\]
Then there is an EC homotopy
$H: \N^* \times [0,2]_{\Z} \rightarrow [0,1]_{\Z}$ from $f$ to $f$ such that the function
$K: \N^* \times [0,2]_{\Z} \rightarrow [0,1]_{\Z}$ defined by
$K(n,t) = H_t(n) * g(n)$ is not continuous in $t$, where
$H_t: \N^* \rightarrow [0,1]_{\Z}$ is the induced function $H_t(n)=H(n,t)$.
\end{Example}

\begin{proof} Define $H(n,t)$ by $H(n,0)=f(n)=g(n)=H(n,2)$,
\[ H(n,1)= \left \{ \begin{array}{ll}
                   f(n) & \mbox{if } n \neq 5; \\
                   1      & \mbox{if } n=5.
                   \end{array} \right .
\]
It is easy to see that $H$ is a homotopy.  However,
$K=H_0 * g = H_2 * g$ and $L=H_1*g$ are represented respectively by the sequences
\[ (K(0),K(1),K(2),\ldots)=(0,1,2,1,0,1,2,1,0,0,\ldots) \]
\[ (L(0),L(1),L(2),\ldots)=(0,1,2,1,0,1,0,1,2,1,0,0,\ldots) \]
In particular, $H_0*g(6)=2$ and $H_1 * g(6)=0$, so at $n=6$,
$H_t*g$ is not continuous in $t$.
\end{proof}

\begin{Proposition}
\label{EC-mult-well-defined}
Let $f,f',g,g'$ be EC loops in $X$ at a common basepoint such that
$f\echtp f'$ and $g \echtp g'$ with both homotopies
holding the endpoints fixed.
Then we have $f*g\echtp f' * g'$ holding the endpoints fixed.
\end{Proposition}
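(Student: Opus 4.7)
The plan is to prove the statement by transitivity of EC homotopy (Proposition~\ref{EC-equiv-relation}), going through the intermediate loop $f * g'$. That is, I will show $f * g \echtp f * g' \echtp f' * g'$ with both homotopies holding the endpoints fixed. The first half, $f * g \echtp f * g'$, is immediate from Lemma~\ref{EC-mult-half}. The real work is the second half, $f * g' \echtp f' * g'$, which is the analog of Lemma~\ref{EC-mult-half} for the \emph{left} factor and is more subtle.

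The naive attempt to set $K(n,t) = (H_t * g')(n)$, where $H$ is an EC homotopy from $f$ to $f'$, fails in general: the splice position $N_{H_t}$ of the product may depend on $t$, and the resulting $K$ need not be continuous in $t$, as Example~\ref{htpy-discont-in-t} shows. The key idea is to replace the moving splice $N_{H_t}$ by a single fixed splice $M$ chosen uniformly for all stages of the homotopy.

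Because $H$ holds the endpoints fixed at the basepoint $x_0$, there exists $M \geq \max(N_f, N_{f'})$ with $H(n,t) = x_0$ for all $n \geq M$ and all $t \in [0,m]_{\Z}$. Define $L: \N^* \times [0,m]_{\Z} \to X$ by
\[
L(n,t) = \left\{ \begin{array}{ll} H(n,t) & \mbox{if } n \leq M; \\ g'(n - M) & \mbox{if } n \geq M. \end{array} \right.
\]
This is well-defined because both expressions equal $x_0$ at $n = M$. Continuity in $n$ is piecewise, from continuity of $H$ in $n$ and of $g'$; continuity in $t$ follows from continuity of $H$ in $t$ for $n \leq M$, and from $L$ being constant in $t$ for $n \geq M$. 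Each stage $L_t$ is eventually constant since $g'$ is.

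It remains to show $L_0 \echtp f * g'$ and $L_m \echtp f' * g'$, both holding endpoints fixed. Inspecting $(L_0)_-$, it is the concatenation $f_- * c * g'_-$ where $c$ is the constant loop at $x_0$ of length $M - N_f$; by Proposition~\ref{star-props} this exhibits $(L_0)_-$ as a trivial extension of $(f * g')_- = f_- * g'_-$. Lemma~\ref{te-homotopy} then gives an EC homotopy holding endpoints fixed between $((f*g')_-)_{\infty} = f * g'$ and $((L_0)_-)_{\infty} = L_0$ (using Proposition~\ref{ec-extension-restriction}). The same argument with $f'$ in place of $f$ yields $L_m \echtp f' * g'$. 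Chaining, $f * g' \echtp L_0 \echtp L_m \echtp f' * g'$, which combined with the first half completes the proof. The main obstacle throughout is the discontinuity phenomenon of Example~\ref{htpy-discont-in-t}, which is precisely what the uniform choice of $M$ is engineered to avoid.
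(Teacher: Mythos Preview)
Your proof is correct and follows essentially the same route as the paper: use Lemma~\ref{EC-mult-half} for the right factor, then for the left factor replace the moving splice $N_{H_t}$ by a uniform splice point $M$ so that the padded homotopy is continuous in $t$; both you and the paper cite Example~\ref{htpy-discont-in-t} as the obstruction the padding overcomes. The only difference is that the paper writes the padded homotopy as $K(n,t)=(H_t*c_t*g')(n)$ and asserts directly that it runs from $f*g'$ to $f'*g'$, whereas you observe (correctly) that $L_0$ and $L_m$ are not literally $f*g'$ and $f'*g'$ but rather trivial extensions of them, and you invoke Lemma~\ref{te-homotopy} to close that gap. Your version is thus a slightly more careful rendering of the same argument.
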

\begin{proof}
By Lemma \ref{EC-mult-half} we have $f*g \echtp f*g'$ holding the endpoints fixed.

By an argument similar to that of the proof of Lemma \ref{EC-mult-half} we will show that $f*g' \echtp f' * g'$.
Example~\ref{htpy-discont-in-t} shows that
$H_t*g'$ will not necessarily be continuous in $t$; however, this is easily fixed by inserting an extra constant segment in the first factor. In particular, let $H: \N^* \times [0,m]_{\Z} \rightarrow X$ be an
EC homotopy from $f$ to $f'$ that holds the endpoints fixed. Let
$M = \max\{N_{H_t} \, | \, t \in [0,m]_{\Z}\}$. For each $t \in [0,m]_{\Z}$,
let $c_t : [0, M-N_{H_t}]_{\Z} \rightarrow \{x_0\}$ be a constant function.
Then the function $K: \N^* \times [0,m]_{\Z} \rightarrow X$
defined by
$K(n,t) = (H_t * c_t * g')(n)$ 
is an EC homotopy from $f*g'$ to $f' * g'$ that holds the endpoints
fixed.

Thus by transitivity of EC homotopy we have $f*g\echtp f'*g'$,
holding endpoints fixed.
\end{proof}

Let $G(X,x_0)$ be the set of all EC homotopy classes of EC loops in $X$ based at $x_0$. 

\begin{Proposition}
\label{EC-grp}
$G(X,x_0)$ with the $\cdot$ operation defined by
$[f] \cdot [g] = [f*g]$ is a group.
\end{Proposition}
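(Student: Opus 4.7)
The plan is to check the four group axioms for $(G(X,x_0),\cdot)$ in turn, leveraging results already established. Well-definedness of the product $[f]\cdot[g]=[f*g]$ is precisely the content of Proposition~\ref{EC-mult-well-defined}, the key technical input.

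Associativity will hold strictly, at the level of functions rather than merely up to EC homotopy. Using Definition~\ref{EC-star} together with the easy identity $N_{f*g}=N_f+N_g$, both $(f*g)*h$ and $f*(g*h)$ equal the EC loop that is $f(n)$ on $[0,N_f]_\Z$, $g(n-N_f)$ on $[N_f,N_f+N_g]_\Z$, and $h(n-N_f-N_g)$ for $n\geq N_f+N_g$. For the identity, I would take the constant EC loop $e:\N^*\to X$ with value $x_0$: since $N_e=0$ and every $x_0$-based EC loop $f$ satisfies $f(n)=x_0$ for $n\geq N_f$, Definition~\ref{EC-star} yields $e*f=f$ and $f*e=f$ on the nose.

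For inverses, given an EC loop $f$ with $N=N_f$, I would define $f^{-1}:\N^*\to X$ by $f^{-1}(n)=f(N-n)$ for $0\leq n\leq N$ and $f^{-1}(n)=x_0$ for $n\geq N$. To establish $[f*f^{-1}]=[e]$, my plan is to transfer the problem to the classical fundamental group. By Proposition~\ref{star-props} (and a bit of bookkeeping, since $N_{f^{-1}}$ can be strictly less than $N$ when $f$ starts with a constant run at $x_0$), $(f*f^{-1})_-$ agrees up to trivial extension with $f_-*(f_-)^{-1}$, the classical loop obtained by traversing $f_-$ and then its reverse. By Lemma~\ref{inverse}, this classical loop has a trivial extension homotopic, holding endpoints fixed, to a constant loop; Theorem~\ref{ec-te-equiv} then converts that classical homotopy into an EC homotopy $f*f^{-1}\echtp e$ holding endpoints fixed. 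The symmetric argument gives $[f^{-1}*f]=[e]$.

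The main obstacle is this inverses step: one must carefully identify $(f^{-1})_-$ with (a trivial extension of) the classical reverse of $f_-$, and then shuttle the nullhomotopy across the $(-)_\infty/(-)_-$ correspondence via Theorem~\ref{ec-te-equiv}. The payoff is exactly what the excerpt advertised for the EC formulation, namely that routine group-theoretic facts become visible without repeated invocation of trivial-extension gymnastics.
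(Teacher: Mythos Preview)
Your proposal is correct, and for well-definedness, associativity, and the identity you give essentially what the paper gives (indeed more: the paper does not even mention associativity, and just says the constant loop is ``clearly'' the identity, whereas you observe that both hold on the nose via $N_e=0$ and $N_{f*g}=N_f+N_g$).

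The one place you diverge is the inverse. The paper simply writes down the same candidate $g(n)=f(N_f-n)$ for $n\le N_f$, $g(n)=x_0$ otherwise, and asserts without further comment that $[g]=[f]^{-1}$; the intended verification is a direct EC nullhomotopy of $f*g$ (the obvious ``retract the hairpin'' homotopy, extended by $x_0$ past $2N_f$). You instead route the argument through the classical theory: identify $f*f^{-1}$ with $(f_-*(f_-)^{-1})_\infty$, invoke Lemma~\ref{inverse} to get a trivial-extension nullhomotopy, and then pull that back via Theorem~\ref{ec-te-equiv}. This is perfectly valid and not circular (both ingredients are proved earlier), and it spares you from writing out an explicit homotopy formula. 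The trade-off is that it runs somewhat counter to your own closing remark about the ``payoff'' of the EC formulation: you end up leaning on exactly the trivial-extension machinery the EC viewpoint is meant to bypass. A two-line direct EC nullhomotopy would be more in the spirit of the section, but your route is a legitimate alternative.
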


\begin{proof} By Proposition~\ref{EC-mult-well-defined}, the $\cdot$ operation
is closed and well defined on $G(X,x_0)$. Clearly, the EC pointed
homotopy class of the constant map $c(n)=x_0$ for all $n \in \N$ is the
identity element. Given an $x_0$-based EC loop $f: \N^* \rightarrow X$, the function
$g: \N^* \rightarrow X$ defined by 
\[ g(n)= \left \{ \begin{array}{ll}
                 f(N_f -n) & \mbox{if } 0 \leq n \leq N_f; \\
                 x_0 & \mbox{if } n \geq N_f,
                 \end{array} \right .
\]
gives an inverse for $[f]$.
\end{proof}

We have the following analog of Theorem~\ref{induced}.

\begin{Theorem}
\label{ec-induced}
Suppose $F: (X, \kappa, x_0) \rightarrow (Y,\lambda, y_0)$
is a pointed continuous function. Then
$F$ induces a homomorphism
$F_*: G(X,x_0) \rightarrow
    G(Y,y_0)$
defined by $F_*([f])=[F \circ f]$.
\end{Theorem}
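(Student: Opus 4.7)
The plan is to verify two things in turn: that $F_*$ is well-defined as a function on EC homotopy classes, and that it preserves the $\cdot$-product.

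For well-definedness, I would first check that $F \circ f$ is a $y_0$-based EC loop in $Y$ whenever $f$ is an $x_0$-based EC loop in $X$. Continuity is automatic as a composition, and the EC condition follows because if $f(n) = x_0$ for all $n \geq N$, then $F \circ f(n) = F(x_0) = y_0$ for all such $n$. Next, given an EC homotopy $H:\N^* \times [0,m]_\Z \to X$ from $f$ to $f'$ holding the endpoints fixed, I would observe that $F \circ H$ is an EC homotopy from $F \circ f$ to $F \circ f'$ also holding endpoints fixed: the continuity requirements transfer from $H$ to $F \circ H$; each stage $(F \circ H)_t = F \circ H_t$ inherits the EC property; and the endpoint conditions at $0$ and at infinity are preserved because $F(x_0) = y_0$.

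For the homomorphism property, I must show $F \circ (f * g) \echtp (F \circ f) * (F \circ g)$ with endpoints fixed. The subtlety is that in general $N_{F \circ f} \leq N_f$, with possible strict inequality (since $F$ may map points other than $x_0$ to $y_0$), so the two concatenations use different transition indices and are typically not equal as functions. I would handle this via the trivial-extension framework: observe that $F \circ f_-:[0,N_f]_\Z \to Y$ is a trivial extension of $(F \circ f)_-:[0,N_{F \circ f}]_\Z \to Y$ (the extra block is constant at $y_0$), and similarly for $g$. Hence $(F \circ f_-) * (F \circ g_-)$ is a trivial extension of $(F \circ f)_- * (F \circ g)_-$.

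The closing step is to identify both sides of the desired homotopy as $_\infty$-extensions of these two paths. By Proposition~\ref{star-props} (first bullet) applied to the EC loops $f$ and $g$, one has $f_- * g_- = (f*g)_-$; composing with $F$ yields $(F \circ f_-) * (F \circ g_-) = F \circ (f*g)_-$, and taking $_\infty$ together with the identity $(h_-)_\infty = h$ from Proposition~\ref{ec-extension-restriction}(a) gives $((F \circ f_-) * (F \circ g_-))_\infty = F \circ (f*g)$. Applying Proposition~\ref{star-props} (first bullet) directly to the EC loops $F \circ f$ and $F \circ g$, and taking $_\infty$ in the same way, gives $((F \circ f)_- * (F \circ g)_-)_\infty = (F \circ f) * (F \circ g)$. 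Lemma~\ref{te-homotopy} applied to the trivial extension identified above now produces an EC homotopy between these two $_\infty$-extensions with endpoints fixed, completing the proof. The main obstacle is precisely the indexing mismatch between $N_{F \circ f}$ and $N_f$; all the bookkeeping with $_-$, $_\infty$, and trivial extensions is in the service of reducing the argument to an invocation of Lemma~\ref{te-homotopy}.
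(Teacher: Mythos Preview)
Your proof is correct and follows essentially the same route as the paper's, which also works through the $_-$ and $_\infty$ operations and Propositions~\ref{ec-extension-restriction} and~\ref{star-props} to identify $[F\circ(f*g)]$ with $[(F\circ f)*(F\circ g)]$. The differences are that you explicitly verify well-definedness (which the paper leaves implicit) and, more substantively, that you isolate the indexing mismatch $N_{F\circ f}\le N_f$ and resolve it via Lemma~\ref{te-homotopy}, whereas the paper's one-line computation passes through the step $[((F\circ f_-)*(F\circ g_-))_\infty]=[(F\circ f_-)_\infty*(F\circ g_-)_\infty]$ by citing only Proposition~\ref{star-props}. Since the second bullet of Proposition~\ref{star-props} is a literal equality of functions only when the first finite factor does not end in a repeated value, your insertion of Lemma~\ref{te-homotopy} is the honest way to justify that step; your argument is thus a slightly more rigorous version of the paper's.
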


\begin{proof} Given $x_0$-based EC loops $f,g: \N \rightarrow X$, we have, by 
using Propositions~\ref{ec-extension-restriction} and ~\ref{star-props},
\begin{align*}
F([f*g])&=[F\circ (f*g)]=  [F \circ ((f*g)_-)_{\infty} ] 
= [((F \circ f_-) * (F \circ g_-))_{\infty}] \\
&= [(F \circ f_-)_{\infty} * (F \circ g_-)_{\infty}] 
= [(F\circ f) * (F \circ g)].
\end{align*}
The assertion follows.
\end{proof}

The main result of this section is the following.

\begin{Theorem}
\label{iso-groups}
Given a digital image $X$ and a point $x_0 \in X$, the groups $G(X,x_0)$ and
$\Pi_1(X,x_0)$ are isomorphic.
\end{Theorem}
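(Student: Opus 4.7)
The plan is to construct the isomorphism explicitly via the operations $(\cdot)_\infty$ and $(\cdot)_-$ from Proposition~\ref{ec-extension-restriction}, which translate between ordinary loops and EC loops. Specifically, I will define $\phi: \Pi_1(X,x_0) \to G(X,x_0)$ by $\phi([f]_X) = [f_\infty]$ for any representative loop $f:[0,m]_\Z \to X$, and then show that $\phi$ is a well-defined group isomorphism.

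First I would verify well-definedness, which is essentially Theorem~\ref{ec-te-equiv}: if $f$ and $g$ lie in the same loop class of $\Pi_1(X,x_0)$, they have trivial extensions homotopic with fixed endpoints, so by the forward direction of Theorem~\ref{ec-te-equiv} the EC loops $f_\infty$ and $g_\infty$ are EC homotopic with fixed endpoints. Next, I would check the homomorphism property: by the second clause of Proposition~\ref{star-props} we have $f_\infty * g_\infty = (f*g)_\infty$, so $\phi([f]_X \cdot [g]_X) = [(f*g)_\infty] = [f_\infty * g_\infty] = \phi([f]_X) \cdot \phi([g]_X)$.

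For surjectivity, given any EC loop $h:\N^* \to X$ based at $x_0$, form the finite loop $h_- : [0, N_h]_\Z \to X$. By Proposition~\ref{ec-extension-restriction}(a), $(h_-)_\infty = h$, so $\phi([h_-]_X) = [h]$. For injectivity, suppose $\phi([f]_X) = \phi([g]_X)$, i.e., $f_\infty \echtp g_\infty$ with fixed endpoints. The reverse direction of Theorem~\ref{ec-te-equiv} then gives trivial extensions of $f$ and $g$ that are homotopic with fixed endpoints, so $[f]_X = [g]_X$.

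There is no real obstacle here; the work has been front-loaded into Theorem~\ref{ec-te-equiv} and Proposition~\ref{star-props}, which were precisely the technical bridges needed to compare the trivial-extension formulation with the EC formulation. The only minor care needed is to make sure that the homotopies used throughout hold endpoints fixed (so that they correspond to loop-class equivalence rather than mere homotopy), but each lemma invoked already carries that refinement.
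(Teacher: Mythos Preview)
Your proposal is correct and follows essentially the same approach as the paper: both define the same map $\phi([f]_X) = [f_\infty]$ and verify that it is a well-defined bijective homomorphism. The only cosmetic difference is in which auxiliary results are cited: the paper appeals to Lemma~\ref{cutoff} for injectivity and Proposition~\ref{EC-mult-well-defined} for the homomorphism property, whereas you use (more appropriately, in fact) both directions of Theorem~\ref{ec-te-equiv} for well-definedness and injectivity, and Proposition~\ref{star-props} for the homomorphism property.
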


\begin{proof} Let $F: \Pi_1(X,x_0) \rightarrow G(X,x_0)$ be defined by
$F([f]_X)=[f_{\infty}]_X$, where $[f_{\infty}]_X$ is the set of EC loops that are $x_0$-based
in $X$ and are EC homotopic in $X$ to $f_{\infty}$ holding the endpoints fixed.

From Lemma~\ref{cutoff}, $F$ is one-to-one. Also, $F$ is onto, since
given an $x_0$-based EC loop $f$, we have $[f]=F([f_-])$.
From Proposition~\ref{EC-mult-well-defined}, $F$ is a homomorphism.
The assertion follows.
\end{proof}

\section{Homotopy equivalence and fundamental groups}
In the paper \cite{Boxer05a}, it is asserted that digital images that are
(unpointed) homotopy equivalent have isomorphic fundamental groups.  However,
the proof of this assertion is
incorrect. Roughly, the flaw in the argument given in~\cite{Boxer05a} is
that insufficient care was given to making sure that a certain 
homotopy between two loops holds the endpoints fixed.
In this section, we give a correction.

\begin{Theorem}
\label{change-basepoints}
{\rm \cite{Boxer99}}
Let $(X,\kappa)$ be a digital image and let $p,r$ be points of the same
$\kappa$-component
of $X$. Let $q$ be a $\kappa$-path in $X$ from $p$ to $r$.  Then the induced function
$q_{\#}: \Pi_1^{\kappa}(X,p) \rightarrow \Pi_1^{\kappa}(X,r)$ defined by
$q_{\#}([f]) = [q^{-1}*f*q]$ is an isomorphism.
\qed
\end{Theorem}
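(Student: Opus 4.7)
The plan is to mimic the classical change-of-basepoint argument from algebraic topology, adapted to the digital setting, by verifying in order: (i) $q_{\#}$ is well-defined on loop classes; (ii) $q_{\#}$ is a homomorphism; and (iii) $q_{\#}$ admits $(q^{-1})_{\#}$ as a two-sided inverse. Throughout, I will rely on the product operation being well-defined (Proposition~\ref{well-defined}) and on the fact that $f^{-1}$ represents $[f]^{-1}$ (Lemma~\ref{inverse}), applied not only to loops based at $p$ but also to the auxiliary identity $[q*q^{-1}]=[\overline{p}]_X$ in $\Pi_1^{\kappa}(X,p)$, which is the single ingredient that lets the path algebra collapse.

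For well-definedness, suppose $[f_1]_X = [f_2]_X$ in $\Pi_1^{\kappa}(X,p)$, so some trivial extensions of $f_1$ and $f_2$ are joined by a homotopy $H$ holding the endpoints fixed at $p$. I would construct a homotopy from $q^{-1}*f_1*q$ to $q^{-1}*f_2*q$ (after taking matching trivial extensions of the two $q$-factors, which is straightforward) by gluing the constant homotopy on $q^{-1}$, the homotopy $H$ on the middle factor, and the constant homotopy on $q$. Each stage is a loop at $r$, and the endpoints at $r$ are fixed because $H$ fixes the endpoints at $p$ and the flanking factors are held constant. This shows $q_{\#}([f_1]_X)=q_{\#}([f_2]_X)$.

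For the homomorphism property I need $[q^{-1}*f*g*q]_X = [q^{-1}*f*q*q^{-1}*g*q]_X$, which reduces to showing that inserting $q*q^{-1}$ in the middle does not change the loop class. This is exactly an application of Proposition~\ref{well-defined} combined with the observation that $[q*q^{-1}]$ acts trivially: one checks directly that $q*q^{-1}$, regarded as a $p$-based loop, is loop-class-equivalent to the constant loop $\overline{p}$ at $p$. The standard ``pinching'' homotopy that slides the turning point of $q*q^{-1}$ inward one step at a time gives this, exactly as in Lemma~\ref{inverse}.

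Finally, for the inverse, the reverse path $q^{-1}$ goes from $r$ to $p$, so $(q^{-1})_{\#}:\Pi_1^{\kappa}(X,r)\to\Pi_1^{\kappa}(X,p)$ is defined by $[g]\mapsto[q*g*q^{-1}]$. Then $(q^{-1})_{\#}\circ q_{\#}([f]) = [q*q^{-1}*f*q*q^{-1}]$, and two applications of the $q*q^{-1}\simeq\overline{p}$ collapse (one at each end, via Proposition~\ref{well-defined}) reduce this to $[f]$; the symmetric computation shows $q_{\#}\circ(q^{-1})_{\#}$ is the identity on $\Pi_1^{\kappa}(X,r)$. The main obstacle throughout is bookkeeping: every homotopy or product must be set up on a common digital interval via trivial extensions, and each intermediate stage must be verified to fix the endpoints. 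This nuisance is precisely what the EC-loop reformulation of Section~\ref{ec-section} is designed to streamline, so an alternative cleaner write-up would carry out the same three steps using EC loops and EC homotopies, invoking Theorem~\ref{iso-groups} to transfer the conclusion to $\Pi_1^{\kappa}$.
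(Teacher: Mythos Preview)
The paper does not actually prove this theorem: it is stated with a citation to~\cite{Boxer99} and closed immediately with \qed, and the text that follows merely remarks that the same argument carries over to the EC-loop formulation (Corollary~\ref{EC-change-basepoints}). So there is no ``paper's own proof'' to compare against here.

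That said, your sketch is the standard change-of-basepoint argument and is correct in outline; it is almost certainly what appears in~\cite{Boxer99}. The three steps (well-definedness via sandwiching a fixed-endpoint homotopy between constant homotopies on the $q$-factors, the homomorphism property via $[q*q^{-1}]=[\overline{p}]$, and bijectivity via $(q^{-1})_{\#}$) are exactly right, and your remark about the trivial-extension bookkeeping being the only real nuisance is apt. One small caution: in step~(ii) you invoke Proposition~\ref{well-defined}, which is stated for \emph{loops} based at a common point, whereas the factors $q^{-1}*f$, $q*q^{-1}$, $g*q$ are paths, not loops; you would need to either phrase the collapse as happening inside the full $r$-based loop, or appeal to the slightly more general fact (implicit in the proofs of Proposition~\ref{well-defined} and Lemma~\ref{inverse}) that concatenation of paths respects fixed-endpoint homotopy in each factor. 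This is a routine extension, but worth stating explicitly in a full write-up.
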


Theorem~\ref{change-basepoints} was proven in ~\cite{Boxer99} for
 the version of the fundamental
group based on finite loops. However, essentially the same argument
makes Theorem~\ref{change-basepoints} valid for the version  of the
fundamental group based on EC loops, stated below.

\begin{Corollary}
\label{EC-change-basepoints}
Let $(X,\kappa)$ be a digital image and let $p,r$ be points of the same
$\kappa$-component
of $X$. Let $q$ be a $\kappa$-path in $X$ from $p$ to $r$.  Then the induced function
$q_{\#}: \Pi_1^{\kappa}(X,p) \rightarrow \Pi_1^{\kappa}(X,r)$ defined for a $p$-based EC loop $f$
in $X$ by
$q_{\#}([f]) = [(q^{-1})_{\infty}*f*q_{\infty}]$, is an isomorphism.
\qed
\end{Corollary}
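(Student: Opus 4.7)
The plan is to deduce the corollary from Theorem~\ref{change-basepoints} by transferring the result along the isomorphism of Theorem~\ref{iso-groups}. Write $F_p : \Pi_1^{\kappa}(X,p) \to G(X,p)$ and $F_r : \Pi_1^{\kappa}(X,r) \to G(X,r)$ for the isomorphisms of Theorem~\ref{iso-groups}, which send a finite-loop class $[f]_X$ to $[f_{\infty}]_X$. Let $q_{\#}$ denote the basepoint-change isomorphism of Theorem~\ref{change-basepoints} on finite loops, and let $q_{\#}^{EC}$ denote the map on EC classes defined in the statement of the corollary. The strategy is to show $q_{\#}^{EC} = F_r \circ q_{\#} \circ F_p^{-1}$, which immediately exhibits $q_{\#}^{EC}$ as a group isomorphism, being a composition of three isomorphisms.

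First I would check that $q_{\#}^{EC}$ is well-defined on EC homotopy classes. If $f \echtp f'$ holding endpoints fixed at $p$, then applying Proposition~\ref{EC-mult-well-defined} twice---using the reflexive EC homotopy of $(q^{-1})_{\infty}$ to itself on the left factor and of $q_{\infty}$ to itself on the right---yields $(q^{-1})_{\infty} * f * q_{\infty} \echtp (q^{-1})_{\infty} * f' * q_{\infty}$ holding endpoints fixed at $r$, so the output class depends only on the input class.

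Next I would verify commutativity of
\[ F_r \circ q_{\#} = q_{\#}^{EC} \circ F_p. \]
For a $p$-based finite loop $f$, the left side is $F_r([q^{-1}*f*q]_X) = [(q^{-1}*f*q)_{\infty}]_X$, while the right side is $q_{\#}^{EC}([f_{\infty}]_X) = [(q^{-1})_{\infty} * f_{\infty} * q_{\infty}]_X$. A direct check using Definition~\ref{EC-star} (extending Proposition~\ref{star-props} from loops to paths) shows these two EC functions $\N^* \to X$ are literally equal, so the classes agree. Since $F_p$ is a bijection, this commutativity forces $q_{\#}^{EC} = F_r \circ q_{\#} \circ F_p^{-1}$, completing the proof.

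The only obstacle I anticipate is a minor technicality: Proposition~\ref{star-props} is stated for EC loops at a common basepoint, but the identity we need, $(q^{-1})_{\infty} * f_{\infty} * q_{\infty} = (q^{-1}*f*q)_{\infty}$, involves factors that are only EC paths with matching endpoints. The verification from Definition~\ref{EC-star} goes through verbatim in this slightly more general setting, using only that the final value of each factor equals the initial value of the next. With that extension noted, all the nontrivial content---the homomorphism property and bijectivity---is inherited from Theorem~\ref{change-basepoints} at no additional cost.
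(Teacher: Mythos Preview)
Your approach is correct and takes a genuinely different route from the paper. The paper does not give an independent proof of this corollary at all: it simply remarks that ``essentially the same argument'' as for Theorem~\ref{change-basepoints} works in the EC setting, and marks the statement with a \qed. In other words, the paper's implicit argument is to re-run the construction of the inverse map $(q^{-1})_{\#}$ and verify the group-homomorphism and bijection properties directly for EC loops. Your argument instead transports the already-proved finite-loop result through the isomorphisms $F_p$, $F_r$ of Theorem~\ref{iso-groups}, so that all of the substantive work (homomorphism, bijectivity) is inherited rather than repeated. This is cleaner and makes the logical dependence on Theorem~\ref{iso-groups} explicit.

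Two small technical remarks. First, your well-definedness step invokes Proposition~\ref{EC-mult-well-defined}, which like Proposition~\ref{star-props} is stated only for EC \emph{loops} at a common basepoint; you acknowledge the needed extension for the latter but should note it applies equally to the former. Second, your claim that $(q^{-1}*f*q)_{\infty}$ and $(q^{-1})_{\infty} * f_{\infty} * q_{\infty}$ are \emph{literally equal} can fail when $q$ or $f$ has repeated terminal values (since the EC $*$-product of Definition~\ref{EC-star} splices at $N_{f_0}$, which may be strictly smaller than the domain endpoint). What is true, and sufficient for your argument, is that the two functions lie in the same EC homotopy class holding endpoints fixed---one is obtained from the other by removing constant segments, and Lemma~\ref{te-homotopy} handles this. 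With that adjustment your commutativity square still holds at the level of classes, and the conclusion follows.
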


\begin{Theorem}
\label{Pi1-iso-thm}
Suppose $(X,\kappa)$ and $(Y,\lambda)$ are (not necessarily pointed) homotopy equivalent
digital images. Let $F: X \rightarrow Y$, $G: Y \rightarrow X$ be homotopy inverses.
Let $p \in X$. Then
$\Pi_1^{\kappa}(X,p)$ and $\Pi_1^{\lambda}(Y, F(p))$ are isomorphic groups.
\end{Theorem}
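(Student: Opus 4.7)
The plan is to study the maps on fundamental groups
\[ F_{*}: \Pi_1^{\kappa}(X,p) \to \Pi_1^{\lambda}(Y, F(p)), \qquad G_{*}: \Pi_1^{\lambda}(Y, F(p)) \to \Pi_1^{\kappa}(X, (G\circ F)(p)) \]
induced in the sense of Theorem~\ref{induced}, and to show that the composite $G_{*}\circ F_{*}$ equals a change-of-basepoint isomorphism $q_{\#}$ in the sense of Corollary~\ref{EC-change-basepoints}. Applying the same construction on the $Y$ side (with a $\lambda$-homotopy from $1_Y$ to $F\circ G$) will yield a companion identity $(F\circ G)_{*} = r_{\#}$ on $\Pi_1^{\lambda}(Y,F(p))$ for a suitable path $r$. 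A short diagram chase using both identities will then force $G_{*}$ to be an isomorphism, and hence $F_{*}$ as well.

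The crux of the plan is the identity $G_{*}\circ F_{*} = q_{\#}$, which asserts that for every $p$-based loop $f$ in $X$,
\[ [(G\circ F)\circ f] \;=\; [q^{-1}*f*q] \quad\text{in}\quad \Pi_1^{\kappa}(X,(G\circ F)(p)), \]
where $q(t)=H(p,t)$ for some chosen $\kappa$-homotopy $H:X\times[0,n]_{\Z}\to X$ from $1_X$ to $G\circ F$. I would prove this in the EC formulation (justified by Theorem~\ref{iso-groups}), which spares the bookkeeping of trivial extensions. The natural candidate is the ``square'' $K(s,t)=H(f(s),t)$, which interpolates between $f$ and $(G\circ F)\circ f$, but whose endpoints trace out $q(t)$ rather than being fixed at $p$. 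To remedy this drift, I would form the corrected family of $p$-based loops $L_{t} := q_{t} * K(\cdot,t) * q_{t}^{-1}$, where $q_{t}$ is the truncation of $q$ to $[0,t]_{\Z}$, so that $L_{0}$ agrees with $f$ and $L_{n}$ agrees with $q*((G\circ F)\circ f)*q^{-1}$, with every intermediate $L_{t}$ being a loop at $p$.

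The main obstacle, and precisely the point at which the argument of~\cite{Boxer05a} broke down, is verifying that the family $\{L_{t}\}$ assembles into a genuine homotopy that is continuous in both variables \emph{while holding the endpoints at $p$ fixed}. The concern is that the correction segments $q_{t}$ and $q_{t}^{-1}$ lengthen with $t$, so their splices onto the moving middle piece $K(\cdot,t)$ must be handled carefully. I expect the EC framework to make this step cleaner: package each $L_{t}$ as an EC loop $\N^{*}\to X$ with constant tail $p$, and build a single EC homotopy $\widetilde{L}:\N^{*}\times[0,n]_{\Z}\to X$ piecewise over four regions (the initial $q_{t}$ segment, the middle $K(\cdot,t)$, the closing $q_{t}^{-1}$ segment, and a constant tail at $p$). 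Continuity in each variable then reduces to continuity of $H$, $f$, and $q$ restricted to the individual pieces, and the endpoints are held fixed at $p$ by construction.

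Once the identity $G_{*}\circ F_{*}=q_{\#}$ is in hand, the rest is formal. By Corollary~\ref{EC-change-basepoints}, $q_{\#}$ is an isomorphism, so $F_{*}$ is injective and $G_{*}$ is surjective onto $\Pi_1^{\kappa}(X,(G\circ F)(p))$. Repeating the construction with the $\lambda$-homotopy from $1_Y$ to $F\circ G$ yields $(F\circ G)_{*}=r_{\#}$ for a $\lambda$-path $r$ from $F(p)$ to $F(G(F(p)))$; factoring $(F\circ G)_{*}$ through $\Pi_1^{\kappa}(X,(G\circ F)(p))$ shows that $G_{*}$ is also injective, hence an isomorphism. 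Therefore $F_{*}=G_{*}^{-1}\circ q_{\#}$ is an isomorphism, as required.
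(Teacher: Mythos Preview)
Your proposal is correct and follows essentially the same approach as the paper. The paper likewise works in the EC formulation, defines the truncated paths $q_t(s)=q(\min\{s,t\})$, builds the corrected family $K(n,t)=(q_t * (H_t\circ f_-) * q_t^{-1})_\infty(n)$ (your $L_t$), verifies this is an EC homotopy holding endpoints fixed, and then concludes via $q_\#\circ G_*\circ F_*=1$ together with Corollary~\ref{EC-change-basepoints} and the symmetric argument on the $Y$ side.
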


\begin{proof}
Let $F_{*}: \Pi_1^{\kappa}(X,p) \rightarrow \Pi_1^{\lambda}(Y,F(p))$ be 
the homomorphism induced by $F$ 
according to Theorem~\ref{ec-induced}.
Let $r=(G \circ F)(p)$.
Let $G_{*}: \Pi_1^{\lambda}(Y,F(p)) \rightarrow \Pi_1^{\kappa}(X,r)$ be 
the homomorphism induced by $G$
according to Theorem~\ref{ec-induced}.
Let $H: X \times [0,m]_Z \rightarrow X$ be a homotopy from $1_X$
to   $G \circ F$.
Let $q$ be the path from $p$ to $r$  defined by
$q(t)=H(p,t)$. 

For $s \in [0,m]_{\bf Z}$, let $q_s: [0,m]_{\bf Z} \rightarrow X$ be the path
from $q(0)=p$ to $q(s)=H(p,s)$ given by $ q_s(t) =  q(\min\{s,t\})$.
For a $p$-based EC loop $f$ in $X$, 
let $K: \N^* \times [0,m]_{\Z} \rightarrow X$ be defined by
\[ K(n,t)=(q_t * (H_t \circ f_-) * (q_t)^{-1})_{\infty}(n).\]

Since $q_t$ is a path from $r$ to 
$q(t) = H(p,t)=H_t(f(0)) = H_t(f_-(N_{f})) = (q_t)^{-1}(0)$, 
$K$ is well defined and, for each $t$, the induced function 
$K_t$ is a EC loop based at $p$.  Also, if we let
 $\overline{p}$ denote the constant EC loop at $p$, then
 \[ K(n,0)=((q_0) * (H_0 \circ f_-) * (q_0)^{-1})_{\infty}(n) = \\
       (\overline{p} * f_- * \overline{p})_{\infty}(n) =f(n) \]
and
\[ K(n,m) = (q_m * (H_m \circ f_-) * (q_m)^{-1}))_{\infty}(n) = \\
    (q * (G \circ F \circ f_-) * q^{-1})_{\infty}(n).
\]
Therefore, $K$ is a EC homotopy from $f$ to
\[ (q * (G \circ F \circ f_-) * q^{-1})_{\infty} =
     q_{\infty} * (G \circ F \circ f_-)_{\infty} * (q^{-1})_{\infty}
     = \\
     q_{\infty} * (G \circ F \circ f) * (q_{\infty})^{-1}\]
that keeps the endpoints fixed.

Let $q_{\#}: \Pi_1^{\kappa}(X,p) \rightarrow \Pi_1^{\kappa}(X,r)$ be defined by
$q_{\#}([f]) = [q_{\infty}*f*(q_{\infty})^{-1}]$. 
By the conclusion of the previous paragraph, the function
$q_{\#} \circ G_* \circ F_*$ is the identity map on $\Pi_1^{\kappa}(X,p)$. We know
from Corollary~\ref{EC-change-basepoints} that $q_{\#}$ is an isomorphism.
It follows that $F_*$ is onto and $G_*$ is one-to-one. A similar argument
shows that $G_*$ is onto and $F_*$ is one-to-one. Therefore,
$F_*$ is an isomorphism.
\end{proof}

\section{Further remarks}
We have given the first example of two digital images with $c_u$-adjacencies
that are homotopy equivalent but not pointed homotopy equivalent. We have introduced a variant of the loop equivalence, based on the notion of
tight at the basepoint (TAB) pointed homotopy, and have explored properties of this notion.
We have given an alternate but equivalent approach to the digital
fundamental group based on EC loops that offers the advantage of
avoiding the often-clumsy use of trivial extensions.
We have
provided a correction to the faulty proof of ~\cite{Boxer05a} that
(unpointed) homotopy equivalent digital images have isomorphic 
fundamental groups.

\end{document}